\documentclass[11pt,reqno]{amsart}

\usepackage[T1]{fontenc}
\usepackage[utf8]{inputenc}
\usepackage[a4paper,margin=1.15in]{geometry}
\usepackage[osf]{mathpazo}
\usepackage{microtype}

\usepackage{amsmath,amssymb,amsthm,mathtools}

\usepackage{aliascnt}
\usepackage{enumitem}
\usepackage{xcolor}
\definecolor{RoyalBlue}{RGB}{30,70,150}
\usepackage[colorlinks=true,
            linkcolor=RoyalBlue,
            citecolor=RoyalBlue,
            urlcolor=RoyalBlue]{hyperref}
\hypersetup{
  pdftitle={The sharp 2/3 exponent in the planar minimal distance problem},
  pdfauthor={Cosmin Pohoata},
  pdfsubject={Point--line separation via the positive trace form in totally real fields},
  pdfkeywords={minimal distance problem, point--line incidences, Szemeredi--Trotter theorem, totally real number fields, positive trace form, Furstenberg--Sarkozy theorem, norm}
}
\usepackage{xurl}
\setlist[itemize]{leftmargin=1.8em,itemsep=0.3em,topsep=0.3em}
\setlist[enumerate]{leftmargin=1.9em,itemsep=0.3em,topsep=0.3em}

\newtheorem{theorem}{Theorem}[section]

\newaliascnt{proposition}{theorem}
\newtheorem{proposition}[proposition]{Proposition}
\aliascntresetthe{proposition}

\newaliascnt{lemma}{theorem}

\aliascntresetthe{lemma}

\newaliascnt{observation}{theorem}
\newtheorem{observation}[observation]{Observation}
\aliascntresetthe{observation}

\newaliascnt{fact}{theorem}
\newtheorem{fact}[fact]{Fact}
\aliascntresetthe{fact}

\newaliascnt{corollary}{theorem}
\newtheorem{corollary}[corollary]{Corollary}
\aliascntresetthe{corollary}

\newaliascnt{conjecture}{theorem}
\newtheorem{conjecture}[conjecture]{Conjecture}
\aliascntresetthe{conjecture}

\theoremstyle{definition}
\newaliascnt{definition}{theorem}

\aliascntresetthe{definition}

\theoremstyle{remark}
\newaliascnt{remark}{theorem}
\newtheorem{remark}[remark]{Remark}
\aliascntresetthe{remark}

\usepackage[nameinlink,capitalise]{cleveref}

\newcommand{\R}{\mathbb{R}}
\newcommand{\Q}{\mathbb{Q}}
\newcommand{\Z}{\mathbb{Z}}
\newcommand{\OK}{\mathcal{O}_K}
\newcommand{\Tr}{\operatorname{Tr}}
\newcommand{\Norm}{\operatorname{N}}

\newcommand{\eps}{\varepsilon}
\newcommand{\dist}{\operatorname{dist}}
\newcommand{\PL}{\Delta_{\mathrm{PL}}}

\crefname{equation}{}{}
\Crefname{equation}{}{}
\crefname{theorem}{Theorem}{Theorems}
\Crefname{theorem}{Theorem}{Theorems}
\crefname{proposition}{Proposition}{Propositions}
\Crefname{proposition}{Proposition}{Propositions}
\crefname{lemma}{Lemma}{Lemmas}
\Crefname{lemma}{Lemma}{Lemmas}
\crefname{corollary}{Corollary}{Corollaries}
\Crefname{corollary}{Corollary}{Corollaries}
\crefname{definition}{Definition}{Definitions}
\Crefname{definition}{Definition}{Definitions}
\crefname{remark}{Remark}{Remarks}
\Crefname{remark}{Remark}{Remarks}

\title[The sharp exponent for the minimal distance problem]
      {The sharp exponent for the minimal distance problem}
\author{Cosmin Pohoata}
\address{Department of Mathematics, Emory University, Atlanta, GA, USA}
\email{cosmin.pohoata@emory.edu}
\date{}

\begin{document}

\begin{abstract}
We show that for every fixed $\eps>0$, there exist arbitrarily large families of point--line pairs $(x_1,\ell_1),\ldots,(x_n,\ell_n)$ in $[0,1]^2$, with $x_i \in \ell_i$ for all $i$, and such that
\[
        \dist(x_i,\ell_j)\ge n^{-2/3-\eps}\qquad(i\ne j).
\]
Combined with a previous result of Cohen, the author, and Zakharov, this solves the minimal distance problem.

The same construction also comes with an unexpected finite field consequence: for every $\eps>0$, there exists a set of primes $q$ of positive relative density for which $\mathbb F_q^2$ contains an induced point-line matching of size $\gtrsim_\eps q^{3/2-\eps}$. This disproves a conjecture of Hunter, the author, Verstra\"ete and Zhang. 
\end{abstract}

\maketitle

\section{Introduction}
\label{sec:intro}

Choose points $x_1,\ldots,x_n\in[0,1]^2$ and, for each $i=1,\ldots,n$, let $\ell_i$ be a line through $x_i$. The minimal distance problem asks how far every
point can be kept from every line to which it is not assigned. More
precisely, we define
\begin{equation}
\label{eq:pl-definition}
        \PL(n):=
        \sup \min_{i\ne j}\dist(x_i,\ell_j),
\end{equation}
where the supremum is over all configurations of points $x_1,\ldots,x_n\in[0,1]^2$ and lines $\ell_1,\ldots,\ell_n \subset \mathbb{R}^{2}$ such that $x_i \in \ell_i$. For convenience, let us call these {\it{point-line incidence configurations}} of size $n$.  The quantity $\PL(n)$ can therefore be regarded as the largest scale at which one can realize $n$ prescribed point--line incidences while avoiding every nontrivial incidence. In \cite{CPZ2025}, Cohen, the author, and Zakharov introduced the problem of determining the asymptotics of $\PL(n)$ as $n$ grows, in connection with the Heilbronn triangle problem. 

Trivially, taking $x_i=(i/n,0)$ and $\ell_i=x_i+(0,1)\R$, for $i=1,\ldots n$, gives a point-line incidence configuration with $\dist(x_i,\ell_j) \ge 1/n$ for all $i \neq j$. Thus, $\PL(n) \geq 1/n$. In the other direction, it is not difficult to see that in any point-line incidence configuration of size $n$, we must have
$\dist(x_i,\ell_j)\le |x_i-x_j| \lesssim n^{-1/2}$ for some $i \neq j$. Hence $\PL(n)\lesssim n^{-1/2}$. In \cite{CPZ2025}, Cohen-Pohoata-Zakharov improved this upper bound to 
\begin{equation} \label{eq:CPZ}
\PL(n)\le n^{-2/3+o(1)},
\end{equation}
and used this estimate to establish the latest record for the Heilbronn triangle problem. For more history and context, see this recent survey by Zakharov \cite{Zakharov2026}. 

The connection itself between the Heilbronn triangle problem and the minimal distance problem is simple enough that we can recall it here: if we let
\[
        \Delta_{\mathrm H}(n)
        :=\sup_{\substack{P\subset[0,1]^2\\ |P|=n}}
          \ \min_{\substack{p,q,r\in P\\ \text{distinct}}}
          \operatorname{area}(p,q,r)
\]
denote the largest possible area of the smallest triangle determined by $n$ points in the unit square, then a simple iterated pigeonhole argument gives, from any $n$-point set $P$, at least $m\gtrsim n$ disjoint pairs $\{p_i,q_i\}$ with $|p_i-q_i|\lesssim n^{-1/2}$. For each $i=1,\ldots,m$, one can then define $\ell_i$ to be the line through $p_i$ and $q_i$. For some $i\ne j$ we have
$\dist(p_i,\ell_j)\le \PL(m)$, and therefore
\[
        \operatorname{area}(p_i,p_j,q_j)
        =\frac12|p_j-q_j|\dist(p_i,\ell_j)
        \lesssim n^{-1/2}\PL(m).
\]
Since $m\gtrsim n$ and $\PL$ is nonincreasing (restricting a given point-line incidence configuration to a smaller configuration cannot decrease its minimum separation), this immediately translates to the fact that there is an absolute constant $c>0$ such that $\Delta_{\mathrm H}(n) \lesssim n^{-1/2}\PL(\lfloor cn\rfloor)$. Thus \eqref{eq:CPZ} yields $\Delta_{\mathrm H}(n)\le n^{-7/6+o(1)}$, the current best known upper bound for the Heilbronn problem.

That being said, the Heilbronn triangle problem is not the only reason to study $\PL$. For example, in a recent paper \cite{HPVZ2026}, Hunter, the author, Verstra\"ete, and Zhang also established several new connections between the minimal distance problem and various other topics in finite geometry and arithmetic combinatorics. A particularly surprising one is with the so-called Furstenberg--S\'ark\H{o}zy problem. This question concerns the quantity
\begin{equation}
\label{eq:sN-definition}
        s(N)
        :=
        \max\Bigl\{|A|:A\subset[N],\ 
        (A-A)\cap\{m^2:m\in\Z\setminus\{0\}\}=\varnothing\Bigr\}.
\end{equation}
In other words, $s(N)$ is the largest size of a subset of $[N]$ containing no two distinct elements whose difference is a perfect square.  Furstenberg
\cite{Furstenberg1977} and S\'ark\H{o}zy \cite{Sarkozy1978}
independently proved that $s(N)=o(N)$.  Furstenberg's argument was ergodic, whereas S\'ark\H{o}zy's circle-method proof was quantitative. This estimate was subsequently sharpened by Pintz--Steiger--Szemer\'edi \cite{PintzSteigerSzemeredi1988} and, more recently, by Bloom--Maynard \cite{BloomMaynard2022} and Green--Sawhney \cite{GreenSawhney2024}, with the current record being at $s(N)\lesssim N\exp\!\bigl(-c\sqrt{\log N}\bigr)$ for some absolute constant $c>0$. 

In \cite[Theorem~1.15 and Proposition~2.3]{HPVZ2026}, Hunter--Pohoata--Verstra\"ete--Zhang proved the following inequality directly connecting the Furstenberg-S\'ark\H{o}zy problem with the minimal distance problem:
\begin{equation}
\label{eq:sN-to-PL}
        \PL\!\left(\left\lfloor cN^{1/2}s(N)\right\rfloor\right)
        \gtrsim N^{-1}.
\end{equation}
Here $c>0$ denotes an absolute constant. Hence any improvement beyond the exponent $2/3$ in \eqref{eq:CPZ} would, in principle, have a substantial consequence for the Furstenberg--S\'ark\H{o}zy
problem (to be precise, an estimate of the form $\PL(n)\le n^{-2/3-\alpha+o(1)}$ for some $\alpha>0$ would immediately imply the power-saving estimate $s(N) \le N^{1-\frac{9\alpha}{4+6\alpha}+o(1)}$).

Conversely, the relation \eqref{eq:sN-to-PL} also immediately converts lower bounds for $s(N)$ into lower bounds for $\PL$ (the original motivation in \cite{HPVZ2026}). In a celebrated paper, Ruzsa
\cite{Ruzsa1984} proved that if $m$ is squarefree and
$D\subset\Z/m\Z$ has no nonzero square difference modulo $m$, then
\begin{equation}
\label{eq:Ruzsa-lower}
        s(N)\gtrsim_{m,D}
        N^{\frac12\left(1+\frac{\log|D|}{\log m}\right)}.
\end{equation}
His choice $|D|=7$ modulo $65$ gave the exponent
$0.733077\ldots$.  Refinements due to Beigel--Gasarch and Lewko give
the best presently known explicit lower bound of $s(N)\gtrsim N^{\beta}$, where $\beta:=\frac12\left(1+\frac{\log 12}{\log 205}\right) \approx 0.733411797$. See \cite{BeigelGasarch2008,Lewko2015}. Substitution into
\eqref{eq:sN-to-PL} then gave the following lower bound in \cite{HPVZ2026}:
\begin{equation}
\label{eq:HPVZ-lower}
        \PL(n)\gtrsim n^{-\gamma},
        \qquad
        \gamma
        :=\frac1{\beta+1/2}
        =0.810759230\ldots.
\end{equation}
This result improved an earlier construction of Logunov--Zakharov
\cite{LogunovZakharov2025}, and prior to this work represented the best known lower bound for $\PL$. 

Our main result closes the gap between \eqref{eq:HPVZ-lower} and \eqref{eq:CPZ}. 

\begin{theorem}
\label{thm:main}
For every $\eps>0$, there exists $n_0(\eps)$ such that, for every integer
$n\ge n_0(\eps)$, there are points $p_1,\ldots,p_n\in[0,1]^2$ and real
lines $\ell_1,\ldots,\ell_n$, with $p_i\in\ell_i$ for every $i$, such
that
\[
        \dist(p_i,\ell_j)\ge n^{-2/3-\eps}
        \qquad(i\ne j).
\]
Equivalently,
\[
        \PL(n)\ge n^{-2/3-\eps}
        \qquad(n\ge n_0(\eps)).
\]
\end{theorem}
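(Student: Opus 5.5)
The plan is to build the configuration from algebraic integers in a totally real number field $K$ of large degree $d$, as the keywords (positive trace form, norm) suggest. The basic scalar case is as follows. For algebraic integers $\alpha\neq\beta$ in $\OK$, the norm satisfies $|\Norm(\alpha-\beta)|\ge 1$. So if every real embedding except one is controlled, the remaining embedding cannot be small. We encode a point $p=(a,b)$ and a line $y=mx+c$ by elements $a,b,m,c\in\OK$, with incidence meaning $b=ma+c$. The quantity governing $\dist(p_i,\ell_j)$ is then
\[
\delta_{ij}:=b_i-m_ja_i-c_j=(m_i-m_j)a_i+(c_i-c_j),
\]
which is a nonzero element of $\OK$ whenever the configuration is chosen generically. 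In the distinguished embedding $\sigma_1$ we need $|\sigma_1(\delta_{ij})|\gtrsim n^{-2/3-\eps}$. The norm bound gives
\[
|\sigma_1(\delta_{ij})|\ge \prod_{k\ge2}|\sigma_k(\delta_{ij})|^{-1}.
\]
It therefore suffices to bound $|\sigma_k(\delta_{ij})|$ from above in the other embeddings.

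The first step is to choose the parameter sets. Take the $a_i$, $m_i$, $c_i$ to range over lattice points of $\OK\cong\Z^d$ lying in a box under the Minkowski embedding. In $\sigma_1$ the box has sizes $1$, $1$ and $1$, so the point lies in $[0,1]^2$. In the other embeddings $\sigma_k$ the box has sizes $A$, $M$ and $C$. The positive trace form $\Tr(x^2)=\sum_k\sigma_k(x)^2$ is the tool for counting lattice points in such boxes. It also lets us use AM--GM: $|\Norm(\delta)|^{2/d}\le \Tr(\delta^2)/d$. Counting lattice points by volume gives $n\approx$ (number of choices) $\approx (AMC\cdot\mathrm{vol})^{d-1}/\mathrm{covol}$ up to the $\sigma_1$-box. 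Meanwhile $|\sigma_k(\delta_{ij})|\lesssim MA+C$, so
\[
|\sigma_1(\delta_{ij})|\gtrsim (MA+C)^{-(d-1)}.
\]

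The second step is to optimize. Take $C\approx MA$. Then $n\approx (A\cdot M\cdot MA)^{d-1}=(MA)^{2(d-1)}\cdot$(discriminant losses), while the separation is $(MA)^{-(d-1)}\approx n^{-1/2}$. This is too good to be true, because it ignores that $\sigma_1$ must also fit in a unit box. The honest count uses Minkowski: the number of lattice points in a box of total volume $V$ is about $V/\sqrt{|\mathrm{disc}_K|}$. The $\sigma_1$ coordinates are only one real dimension, so the discriminant cost $|\mathrm{disc}_K|^{3/2}$ (one factor for each of the three parameters) is what must be paid. I expect the correct bookkeeping to give $n\approx X^{3(d-1)}/\mathrm{disc}^{3/2}$ against a separation of $X^{-2(d-1)}$, roughly. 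This yields the exponent $2/3$, provided the root discriminant stays bounded as $d\to\infty$, i.e.\ $|\mathrm{disc}_K|^{1/d}=O(1)$. The $\eps$ loss comes from $d$ being finite and from root-discriminant constants. I would invoke the existence of totally real fields of growing degree with bounded root discriminant (class field towers, Golod--Shafarevich, Martinet) as a black box.

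The main obstacle is to make the count and the separation match exactly at exponent $2/3$. This requires choosing box sizes across the three parameter types so that the trace-form AM--GM bound on $\Norm(\delta_{ij})$ is tight. We must also verify that $\delta_{ij}\neq0$ and that $\dist$ is comparable to $|\sigma_1(\delta_{ij})|$, which holds because the slopes are bounded. Finally, we pass from a specific $n$ to all $n\ge n_0(\eps)$ by monotonicity of $\PL$, since the achievable values of $n$ grow geometrically.
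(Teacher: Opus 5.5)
There is a genuine gap at the sentence claiming that $\delta_{ij}$ is nonzero ``whenever the configuration is chosen generically.'' Nothing in your construction gives this, and it is the whole difficulty. Your count $n\approx(AMC)^{d-1}$ runs over triples $(a_i,m_i,c_i)$, but $\ell_j$ depends only on $(m_j,c_j)$. Two indices sharing $(m,c)$ therefore already have $\delta_{ij}=0$. Keeping one point per line does not help. For a typical $i$ and a positive proportion of slopes $m_j$ in the box, $c_j:=c_i-(m_i-m_j)a_i$ is again an admissible intercept once $C\gtrsim MA$, so every point lies on many other lines of the family.

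This is why you obtained the impossible separation $n^{-1/2}$, which contradicts $\PL(n)\le n^{-2/3+o(1)}$. The cause is not the $\sigma_1$-normalization or the discriminant. For a fixed field $K$ the discriminant only affects constants, so it cannot move an exponent, and no bounded-root-discriminant towers are needed.

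Your ``expected'' bookkeeping, $n\approx X^{3(d-1)}$ against separation $X^{-2(d-1)}$, is exactly what the full family of distinct lines gives with $A=M=X$, $C=X^2$. That family is the number-field analogue of the Szemer\'edi--Trotter configuration of \cref{prop:ST-grid}, which has many incidences and is far from an induced matching. Separately, $|\Norm(\delta)|^{2/d}\le\Tr(\delta^2)/d$ bounds the norm from above, so it gives nothing toward a lower bound on $|\sigma_1(\delta)|$.

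The missing idea is an arithmetic obstruction that rules out every off-diagonal incidence over $K$ while costing only a bounded number of dimensions. This, and not counting or AM--GM, is the real role of the positive trace form.

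\begin{itemize}
\item The paper ties the slope to the point: $p=\bigl(\tfrac{a+y^2}{2},y\bigr)$ with line $p+K(y,1)$, the tangent to the parabola $2x-y^2=a$.
\item Since $Q(p+t(y,1))=Q(p)-t^2$ for $Q=2x-y^2$, an incidence $p'\in\mathcal L_p$ is equivalent to $D(p,p')=\tfrac12\bigl(a-a'-(y'-y)^2\bigr)=0$, i.e.\ to $a-a'$ being a square.
\item Taking $a$ in the trace-zero sublattice of $2\OK$ rules this out, because $\Tr_{K/\Q}(z^2)=\sum_k\sigma_k(z)^2>0$ for $z\neq0$.
\item Let $a$ range over the trace-zero box of radius $M^2$ (about $M^{2(d-1)}$ choices) and $y\in2\OK$ over the box of radius $M$ (about $M^d$ choices). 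Then each $D(p,p')$ with $p\neq p'$ is a nonzero algebraic integer with all conjugates at most $3M^2$.
\item The norm therefore gives $|\sigma(D(p,p'))|\ge(3M^2)^{-(d-1)}$. After rescaling to $[0,1]^2$, the separation is at least $(3M^2)^{-d}$ with $n\asymp_K M^{3d-2}$.
\end{itemize}

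This yields the exponent $2d/(3d-2)=2/3+4/(9d-6)$ for one fixed totally real field of degree $d$ chosen in terms of $\eps$ (e.g.\ $\Q(\zeta_p+\zeta_p^{-1})$), with only $M\to\infty$. The codimension-one trace condition is the entire loss relative to the exponent-$2/3$ grid, and large $d$ dilutes it.
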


Combining \cref{thm:main} with \eqref{eq:CPZ}, it thus follows that the correct exponent for the minimal distance problem is $2/3$. 

\begin{corollary}
\label{cor:sharp}
As $n\to\infty$,
\[
        \PL(n)=n^{-2/3+o(1)}.
\]
\end{corollary}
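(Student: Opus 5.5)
The corollary follows directly from combining the two one-sided estimates already in hand; no new construction is needed at this point. The plan is to show that for every fixed $\eps>0$ we have $n^{-2/3-\eps}\le \PL(n)\le n^{-2/3+\eps}$ for all sufficiently large $n$. This is exactly the statement $\PL(n)=n^{-2/3+o(1)}$, i.e.\ $\log \PL(n)/\log n\to -2/3$.

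For the upper bound, we invoke the Cohen--Pohoata--Zakharov estimate \eqref{eq:CPZ}, namely $\PL(n)\le n^{-2/3+o(1)}$. Unpacked, it says that for each $\eps>0$ there is $n_1(\eps)$ with $\PL(n)\le n^{-2/3+\eps}$ for $n\ge n_1(\eps)$. For the lower bound, we invoke \cref{thm:main}, which gives $n_0(\eps)$ with $\PL(n)\ge n^{-2/3-\eps}$ for $n\ge n_0(\eps)$. Taking $n\ge\max(n_0(\eps),n_1(\eps))$ gives
\[
-\tfrac23-\eps\le \frac{\log\PL(n)}{\log n}\le -\tfrac23+\eps .
\]
Since $\eps>0$ is arbitrary, $\log\PL(n)/\log n\to-2/3$, which is the claim.

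There is no genuine obstacle at this step. The only point requiring care is making sure both inputs are stated for \emph{every} large $n$, not merely along a subsequence. \Cref{thm:main} is stated for all $n\ge n_0(\eps)$. If the construction behind it only produced configurations for a sparse sequence of sizes $n_k$, one would instead pass to intermediate $n$ using the monotonicity of $\PL$ noted in the introduction (deleting pairs from a configuration cannot decrease its minimum separation). This fills the gaps between consecutive $n_k$, provided $\log n_{k+1}/\log n_k\to1$, which costs only an extra $o(1)$ in the exponent. All of the real difficulty lies in proving \cref{thm:main} itself.
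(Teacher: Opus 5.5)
Your proposal is correct and is exactly the paper's argument: the corollary is obtained by combining the lower bound of \cref{thm:main} with the Cohen--Pohoata--Zakharov upper bound \eqref{eq:CPZ}. Your side remark about monotonicity is also sound, and it is implicitly how the paper's proof of \cref{thm:main} reaches every large $n$ from configurations of size $|A||Y|\asymp_K M^{3d-2}$, though you do not need it once \cref{thm:main} is taken as stated.
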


The same construction also comes with an unexpected finite field consequence. For a prime power $q$, let $\operatorname{IM}(2,q)$ denote the largest
$n$ for which there exist points
$p_1,\dots,p_n\in\mathbb F_q^2$ and affine lines
$\ell_1,\dots,\ell_n\subset\mathbb F_q^2$ such that $p_i\in\ell_j$ if and only if $i=j$. As discussed in \cite{CPZ2023,CPZ2025,HPVZ2026}, this is a natural finite-field model for the minimal distance problem: avoidance of off-diagonal incidences over $\mathbb F_q$ plays the role of quantitative point--line separation over $\mathbb R$. A simple Cauchy-Schwarz incidence estimate gives $\operatorname{IM}(2,q)\le q^{3/2}+q$, for every prime power $q$. This may be viewed as the finite-field
precursor of \eqref{eq:CPZ}.

When $q$ is a square, the Hermitian unital gives an induced matching of size $q^{3/2}$, showing that this estimate is sharp up, to lower-order terms. See for example \cite{HPVZ2026} for a more elaborate discussion. For fields of prime order $q$, the situation is significantly more mysterious. In \cite[Theorem~1.2]{HPVZ2026}, Hunter--Pohoata--Verstra\"ete--Zhang used \eqref{eq:Ruzsa-lower} to prove that $\operatorname{IM}(2,q)\gtrsim q^{1.2334\ldots}$ holds for all primes $q$, thereby establishing a separation between this problem and the closely related problem of determining the size of the largest clique in the Paley graph of prime order $q$. Furthermore, in \cite[Conjecture~10.2]{HPVZ2026}, the authors also subsequently conjectured that $\operatorname{IM}(2,q) \leq q^{3/2-c}$ must hold for some absolute constant $c > 0$, for all sufficiently large primes $q$. 

Using the construction behind Theorem \ref{thm:main}, we can show that this conjecture is false, as originally stated. 

\begin{theorem}
\label{thm:prime-field-matching}
For every $\eps>0$, there is a set $\mathcal Q_\eps$ of rational
primes having positive relative density among the primes such that,
for every sufficiently large $q\in\mathcal Q_\eps$,
$$\operatorname{IM}(2,q)
        \gtrsim_\eps q^{3/2-\eps}.$$
\end{theorem}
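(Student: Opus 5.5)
The plan is to reduce the real construction behind \cref{thm:main} modulo a prime of $K$ of residue degree one. That construction uses a totally real field $K$ of degree $d=d(\eps)$. I take its data to be points $p_i=(a_i,b_i)\in\OK^2$ and lines $\ell_j=\{y=m_jx+c_j\}$ with $m_j,c_j\in\OK$, up to an overall real rescaling into $[0,1]^2$ that I would undo first. The separation comes from the incidence quantity
\[
f(i,j):=b_i-m_ja_i-c_j\in\OK .
\]
It vanishes for $i=j$ and is a nonzero algebraic integer for $i\ne j$, with the positive trace form used to certify nonvanishing. The real distance bound then comes from $|\Norm(f(i,j))|\ge 1$ together with upper bounds on the other conjugates. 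I will assume the construction provides, or can be arranged to provide, an upper bound $|\sigma(f(i,j))|\le H$ in \emph{every} real embedding $\sigma$, with $n$ depending polynomially on $H$ and $d$.

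\textbf{Step 1 (choosing $q$).} Let $\mathcal Q_\eps$ be the set of rational primes splitting completely in $K$, or in its Galois closure. By Chebotarev this set has relative density $1/[\widetilde K:\Q]>0$. For $q\in\mathcal Q_\eps$ pick a prime $\mathfrak q\mid q$ with $\OK/\mathfrak q\cong\mathbb F_q$, and let $\pi:\OK\to\mathbb F_q$ be the reduction map. Define $\bar p_i=(\pi a_i,\pi b_i)$ and $\bar\ell_j=\{y=\pi(m_j)x+\pi(c_j)\}$. Then $\bar p_i\in\bar\ell_j$ if and only if $\mathfrak q\mid f(i,j)$. The diagonal incidences $\bar p_i\in\bar\ell_i$ are automatic.

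\textbf{Step 2 (no off-diagonal incidences).} For $i\ne j$, if $\mathfrak q\mid f(i,j)\neq0$ then
\[
q=\Norm(\mathfrak q)\le|\Norm(f(i,j))|=\prod_\sigma|\sigma(f(i,j))|\le H^d .
\]
So it suffices to choose the scale so that $H^d<q$. Distinctness of the reduced points and lines then follows for free: if $\bar p_i=\bar p_j$ with $i\ne j$, then $\bar p_i\in\bar\ell_j$, a contradiction, and similarly for the lines. Hence we get an induced matching of size $n$ in $\mathbb F_q^2$.

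\textbf{Step 3 (parameter count).} The remaining task is to check that with $H\approx q^{1/d}$ the construction yields $n\gtrsim_\eps q^{3/2-\eps}$. Heuristically this matches the real problem. In the real construction, separation $\delta$ is obtained from a norm lower bound $1$ against a conjugate bound $H$, so the effective number of points is $n\approx\delta^{-3/2+o(1)}$ with $\delta^{-1}\approx H^{d}$ up to $H^{O(1)}$ losses. Taking $d$ large relative to $1/\eps$ makes these losses $q^{O(1/d)}=q^{o_\eps(1)}$. Concretely, I would redo the counting of \cref{thm:main} with the norm bound $q$ replacing the inverse separation.

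\textbf{Main obstacle.} The hard step is Step 3, in two parts. First, one must ensure that \emph{all} conjugates of $f(i,j)$, not just the distinguished real embedding, are bounded by $H$. Second, the count of admissible pairs must give exponent $3/2$ in $H^d$ rather than something weaker. If the real construction only controls the non-distinguished embeddings, I would first try to bound the distinguished embedding directly, using that the points lie in $[0,1]^2$ before rescaling. That should give $|f|\lesssim$ the rescaling factor, which is itself a power of $H$ absorbed into $q^{o(1)}$.
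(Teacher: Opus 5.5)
Your Steps 1 and 2 are exactly the paper's mechanism (the proof of \cref{prop:finite-field-reduction} together with the split-prime argument). You reduce an $\OK$-integral point--line family modulo a degree-one prime $\mathfrak q$. An off-diagonal incidence would then force $q=\Norm(\mathfrak q)$ to divide a nonzero norm of absolute value at most $H^d<q$, and distinctness of the reduced points and lines comes for free. Using Chebotarev for an arbitrary totally real $K$ is fine. The paper instead takes $K=\Q(\zeta_r+\zeta_r^{-1})$, where the primes $q\equiv\pm1\pmod r$ split completely, so Dirichlet's theorem suffices and the density $2/(r-1)$ is explicit.

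The gap is Step 3. You leave it as an assumption plus a heuristic, but it is precisely where the exponent $3/2$ comes from, so as written the proof is incomplete. Neither an all-embedding bound $|\rho(f(i,j))|\le H$ nor a count of the family in terms of $H$ can be read off from the statement of \cref{thm:main}. Your relation $\delta^{-1}\approx H^d$ also depends on how the construction is rescaled in the distinguished embedding, so it must be checked on the explicit construction.

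The paper does this directly. Take $A=A_K(M^2)$, the trace-zero elements of $2\OK$ all of whose conjugates are at most $M^2$ in absolute value, and $Y=Y_K(M)=2\OK\cap B_K(M)$. The points are $p=\bigl(\frac{a+y^2}{2},y\bigr)$ and the lines are $p+K(y,1)$. The incidence quantity is $D(p,p')=\frac{a-a'-(y'-y)^2}{2}\in\OK$. It is nonzero off the diagonal because $\Tr_{K/\Q}(z^2)>0$ for $z\neq0$. Since the Minkowski boxes constrain every real embedding at once, $|\rho(D(p,p'))|\le 3M^2$ for all $\rho$, so your fallback of treating the distinguished embedding separately is unnecessary.

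Lattice-point counting for lattices of rank $d-1$ and $d$ gives $n=|A||Y|\asymp_K M^{3d-2}$, that is, $n\asymp H^{3d/2-1}$ with $H=3M^2$. Choosing $M=\lfloor q^{1/(2d)}/2\rfloor$ gives $(3M^2)^d\le(3/4)^dq<q$ and $n\gtrsim_K q^{3/2-1/d}$, and one takes $d>1/\eps$. With this inserted, your argument becomes the paper's proof.
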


More precisely, what we will show that for every fixed prime $r\ge5$, we must always have that $\operatorname{IM}(2,q) \gtrsim_r q^{3/2-\frac{2}{r-1}}$ for every sufficiently large prime $q\equiv\pm1\pmod r$. By the prime number theorem in arithmetic progressions, these primes have relative density $2/(r-1)$ among the primes.

\medskip

\noindent {\bf{Overview}}.  The proof of Theorem \ref{thm:main} will build upon the ideas from \cite{HPVZ2026}. Roughly speaking, the driving force behind the relation \eqref{eq:sN-to-PL} is a very simple polynomial identity: if $Q(x,y)=2x-y^2 \in \mathbb{Z}[x,y]$, then
\[
        Q\bigl((x,y)+t(y,1)\bigr)=Q\bigl(x+ty,y+t\bigr)=Q(x,y)-t^2,\ \ \text{for every}\ t \in \mathbb{Z}.
\]
This has the following crucial consequence. If $A \subset [N]$ is a set of integers without perfect square differences, and we let $P = \left\{(x,y) \in \mathbb{R}^{2}: Q(x,y) \in A\right\}$, then the ``integer line"
$\ell_{x,y} = \left\{(x,y)+t(y,1), \ t \in \mathbb{Z}\right\}$ can only intersect $P$ at the point $(x,y)$ itself. After further reflection, this allows one to convert a square-difference-free set $A \subset [N]$ of $Q$-values into an induced point--line matching in the Euclidean plane. A direct computation can then subsequently show that such an induced point-line matching is in fact a point-line incidence configuration with strong Euclidean separation (in the sense of \eqref{eq:pl-definition}). Over the integers, the quality of this separation will constrained by the Furstenberg--S\'ark\H{o}zy problem and by the density of the Ruzsa sets $A \subset [N]$. 

For $s(N)$ constructions, it is well-known that the natural endpoint of the Ruzsa digit method is $\beta=3/4-o(1)$, which \eqref{eq:sN-to-PL} would readily convert into the exponent $\gamma=4/5+o(1)$ for the minimal distance problem. In order to use \eqref{eq:Ruzsa-lower} to show that $s(N) \gtrsim N^{3/4-o(1)}$, one would however need to find, for squarefree moduli $m$, modular square-difference-free sets of size $m^{1/2-o(1)}$. At a single prime this becomes a large independent-set problem in the Paley graph and square-root-size examples are not expected. Composite moduli could in principle allow for more elaborate entangled constructions, but in practice the best known exponent remains $\approx 0.733411797$ (and achieving the $3/4-o(1)$ limit for $s(N)$ might not even be possible with this method to begin with). 

The proof of Theorem \ref{thm:main} bypasses this barrier and the Ruzsa limit altogether, by performing the same construction (from scratch) in a high degree totally real number field instead. A key idea will be to replace the Ruzsa set by the trace-zero slice of its ring of integers. The positivity of the quadratic form $\Tr_{K/\Q}(z^2)$ rules out nonzero square differences in this slice, while the field norm converts algebraic nonvanishing into quantitative Euclidean separation after one real embedding. 

\begin{remark}
Curiously, the high degree number fields enter the proof of Theorem \ref{thm:main} in a very different manner than how they do in the recent constructions for the unit distance problem \cite{AlonEtAl2026,LeePohoataZhu2026}, the sum-product problem over the reals \cite{BSSZ2026}, and the Elekes-R\'onyai problem \cite{PohoataER2026}. In all these other arguments, a local gain is (vertically) amplified across many residue-field, unit, or archimedean coordinates, and one needs arithmetic control, e.g. bounded root discriminant and/or split primes, to keep the ambient lattice from overwhelming that gain. See for example \cite{PohoataER2026, Bloom, Tao} and the references therein for more systematic discussions. Here the field is fixed before the box grows, and no sophisticated tower, split-prime input, or uniform regulator estimate is needed. The trace equation removes only one global dimension, and the degree increase will be used solely to dilute the effect of this fixed codimension. Thus, for example, if $\zeta_p=e^{2\pi i/p}$ denotes a primitive $p$th root of unity, it suffices to take $K=\Q(\zeta_p+\zeta_p^{-1})$ for a sufficiently large prime $p$.
\end{remark}

\medskip

\noindent {\bf{Paper organization}}. The rest of the paper is organized as follows. In \cref{sec:parabola} we will review the construction from \cite{HPVZ2026} in full detail, with all the computations done explicitly. In
\cref{sec:trace} we will construct the trace-zero sets. In
\cref{sec:lift} we will discuss the new number-field construction and the proof of Theorem \ref{thm:main}. In \cref{sec:split-primes} we will discuss the proof of Theorem \ref{thm:prime-field-matching}. 

\section{The Hunter-Pohoata-Verstra\"ete-Zhang construction}
\label{sec:parabola}

With an eye on the number field story that will follow, let us first isolate some basic facts that hold in greater generality. 

\subsection{The induced-matching identity}

Let $F$ be any field of characteristic different from $2$, and let us $Q(x,y):=2x-y^2$ as a polynomial in $F[x,y]$. In particular, the level set $Q(x,y)=a$ is a parabola in $F^2$.  For a point $p=(x,y)\in F^2$,
let
\begin{equation}
\label{eq:algebraic-line}
        \mathcal L_p
        :=p+F(y,1)
        =\{(x+ty,y+t):t\in F\}.
\end{equation}
This is the tangent line to the parabola $Q=Q(p)$ at $p$. For two points $p=(x,y)$ and $p'=(x',y')$, let us also single out the following recurring quantity:
\begin{equation}
\label{eq:D-definition}
        D(p,p'):=x-x'+y(y'-y).
\end{equation}
We next record the key identity mentioned above, along with some important consequences.

\begin{observation}
\label{lem:parabola-identity}
For $p=(x,y)\in F^2$ and $t\in F$, $Q\bigl(p+t(y,1)\bigr)=Q(p)-t^2$. Moreover, for $p'=(x',y')$, we have that $Q(p)-Q(p')=2D(p,p')+(y'-y)^2$,
and
\begin{equation}
\label{eq:line-incidence-D}
        p'\in\mathcal L_p
        \quad\Longleftrightarrow\quad
        D(p,p')=0.
\end{equation}
\end{observation}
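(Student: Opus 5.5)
The observation is a direct polynomial computation, and I would verify its three claims in order by expanding $Q(x,y)=2x-y^2$.

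\emph{First claim.} Substitute $p+t(y,1)=(x+ty,\,y+t)$ into $Q$:
\[
Q(x+ty,y+t)=2x+2ty-(y^2+2ty+t^2)=2x-y^2-t^2=Q(p)-t^2 .
\]
The cross terms $\pm 2ty$ cancel, which is exactly why the direction $(y,1)$ is used.

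\emph{Second claim.} Expand the right-hand side. We have
\[
2D(p,p')=2x-2x'+2yy'-2y^2 ,
\]
and adding $(y'-y)^2=y'^2-2yy'+y^2$ gives
\[
2x-2x'-y^2+y'^2=(2x-y^2)-(2x'-y'^2)=Q(p)-Q(p').
\]

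\emph{Third claim.} This is the equivalence \eqref{eq:line-incidence-D}. By definition, $p'\in\mathcal L_p$ if and only if there is $t\in F$ with $x'=x+ty$ and $y'=y+t$. The second equation forces $t=y'-y$, so membership is equivalent to the single condition $x'=x+(y'-y)y$. Rearranged, this is $x-x'+y(y'-y)=0$, i.e.\ $D(p,p')=0$.

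Alternatively, one can combine the first two claims. With $t=y'-y$, the point $p+t(y,1)$ has second coordinate $y'$, and its $Q$-value is $Q(p)-(y'-y)^2$. This equals $Q(p')$ exactly when $D(p,p')=0$, and since a point with a given second coordinate is determined by its $Q$-value (as $\operatorname{char}F\neq2$), this again gives the equivalence. The direct coordinate argument is cleaner, though, and I would present that one.

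None of these steps is a real obstacle; the only point needing care is that $t$ is uniquely determined by the second coordinate. The characteristic assumption is needed only so that $Q$ is the intended parabola; the identities themselves hold over any commutative ring.
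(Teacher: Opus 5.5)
Your proposal is correct and follows the paper's proof: the first two claims are the same direct expansions of $Q(x,y)=2x-y^2$. For the incidence criterion, the paper evaluates the implicit line equation $\Lambda_p(X,Y)=X-yY+(y^2-x)$ at $p'$ to get $-D(p,p')$; this is exactly your elimination of $t=y'-y$ from the parametrisation, written in implicit form, and your version has the minor advantage of not needing to justify that $\Lambda_p$ cuts out precisely $\mathcal L_p$.
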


In particular, Observation \ref{lem:parabola-identity} gives us the following important implication:
\begin{equation}
\label{eq:parabola-incidence}
        p'\in\mathcal L_p
        \quad\Longrightarrow\quad
        Q(p)-Q(p')=(y'-y)^2.
\end{equation}

For the sake of completeness, we include the easy proof as well.

\begin{proof}
The parametrisation in \eqref{eq:algebraic-line} gives
\[
\begin{aligned}
        Q(x+ty,y+t)=2(x+ty)-(y+t)^2=2x-y^2-t^2,
\end{aligned}
\]
which gives the first equality. Expanding $2D(p,p')+(y'-y)^2$ gives
\[
        2x-2x'+2y(y'-y)+(y'-y)^2
        =2x-y^2-(2x'-y'^2)=Q(p)-Q(p').
\]
Finally, a defining equation for $\mathcal L_p$ is
\begin{equation}
\label{eq:unscaled-line-equation}
        \Lambda_p(X,Y):=X-yY+(y^2-x)=0.
\end{equation}
Its value at $p'$ is $\Lambda_p(p')=x'-yy'+y^2-x=-D(p,p')$, so it vanishes exactly when $p'\in\mathcal L_p$.
\end{proof}

A useful take-away is that the quantity $D(p,p')$ is precisely the negative of the value at
$p'$ of the normalized affine equation \eqref{eq:unscaled-line-equation} for $\mathcal L_p$ (so it detects incidences). For any $A,Y\subset F$, let us next define the set
\begin{equation}
\label{eq:P-A-Y}
        \mathcal P(A,Y)
        :=\{(x,y)\in F^2:y\in Y,\ Q(x,y)\in A\}.
\end{equation}
Because the characteristic is not $2$, the map
\begin{equation}
\label{eq:A-Y-bijection}
        A\times Y\longrightarrow\mathcal P(A,Y),
        \qquad
        (a,y)\longmapsto\left(\frac{a+y^2}{2},y\right),
\end{equation}
is a bijection. To take advantage of \eqref{eq:parabola-incidence}, recall that the set $A \subset F$ will be chosen to be a set with $(A-A)\cap\{t^2:t\in F^\times\}=\varnothing$. For convenience, let us call such sets \emph{square-difference-free}. 

\begin{corollary}
\label{cor:algebraic-matching}
If $A\subset F$ is square-difference-free, then the pairs $\{(p,\mathcal L_p):p\in\mathcal P(A,Y)\}$ form an induced matching: for $p,p'\in\mathcal P(A,Y)$,
\[
        p'\in\mathcal L_p
        \quad\Longleftrightarrow\quad
        p'=p.
\]
\end{corollary}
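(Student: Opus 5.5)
The plan is to combine the implication \eqref{eq:parabola-incidence} from \cref{lem:parabola-identity} with the square-difference-free hypothesis on $A$, and then to use the bijection \eqref{eq:A-Y-bijection} to conclude that $p'=p$.

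The reverse implication is immediate. We have $p\in\mathcal L_p$ because $t=0$ in \eqref{eq:algebraic-line}; equivalently, $D(p,p)=0$ in \eqref{eq:line-incidence-D}.

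For the forward direction, let $p=(x,y)$ and $p'=(x',y')$ lie in $\mathcal P(A,Y)$ with $p'\in\mathcal L_p$. By \eqref{eq:parabola-incidence}, $Q(p)-Q(p')=(y'-y)^2$. Here $Q(p),Q(p')\in A$ by the definition \eqref{eq:P-A-Y}, so $(y'-y)^2\in A-A$. Set $t:=y'-y$ and suppose $t\neq 0$. Then $t\in F^\times$, and $t^2$ lies in $(A-A)\cap\{t^2:t\in F^\times\}$. This set is empty by hypothesis, a contradiction. Hence $y'=y$.

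With $y'=y$, the identity $Q(p)-Q(p')=(y'-y)^2$ gives $Q(p)=Q(p')$, so both points correspond to the same pair $(a,y)$ under \eqref{eq:A-Y-bijection}. Concretely, $x=(Q(p)+y^2)/2=x'$, using $\operatorname{char}F\neq2$. Thus $p'=p$.

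There is no real obstacle here. The only point needing care is the case $t=0$: it is excluded from the square-difference-free condition, and it is precisely the diagonal case, which the bijection handles.
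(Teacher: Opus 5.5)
Your proof is correct and is essentially the paper's argument: \eqref{eq:parabola-incidence} together with square-difference-freeness forces $y'=y$ and $Q(p)=Q(p')$, and hence $x'=x$. Beyond the paper, you also spell out the trivial reverse direction and state explicitly that the hypothesis only rules out squares of nonzero elements, a point the paper leaves implicit.
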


\begin{proof}
Suppose that $p'\in\mathcal L_p$.  By
\eqref{eq:parabola-incidence}, the difference $Q(p)-Q(p')$ is a square.
Since both values lie in $A$, square-difference-freeness gives
$Q(p)=Q(p')$.  Hence $(y'-y)^2=0$, so $y'=y$, and also $x'=x$.
\end{proof}

\subsection{Euclidean separation}

We now specialise to the integer model and compute explicitly the Euclidean separation.  Let
$q$ be a positive integer and let $A\subset[q^2]$ be
square-difference-free in $\Z$.  Set $\mathcal P_q(A):=\left\{\left(\frac{a+y^2}{2},y\right): a\in A,\ y\in[q]\right\}$. For every point $p=(x,y)\in\mathcal P_q(A)$, regard $\mathcal L_p$ as a real line $\mathcal L_p=\{(x+ty,y+t):t\in \mathbb{R}\}$, and apply the anisotropic linear map $\phi_q(x,y):=\left(\frac{y}{q},\frac{x}{q^2}\right)$. Finally, write $p^*:=\phi_q(p)$ and $\ell_p^*:=\phi_q(\mathcal L_p)$. We can now check the following:

\begin{proposition}
\label{prop:HPVZ-matching}
The set $\mathcal P_q(A)$ has $q|A|$ elements, every $p^*$ lies in
$[0,1]^2$, and $p^*\in\ell_p^*$.  For
$p=(x,y),p'=(x',y')\in\mathcal P_q(A)$,
\begin{equation}
\label{eq:integer-distance}
        \dist(p'^*,\ell_p^*)
        =
        \frac{|D(p,p')|}{\sqrt{q^4+q^2y^2}}.
\end{equation}
If $p\ne p'$, then
\begin{equation}
\label{eq:HPVZ-separation}
        \dist(p'^*,\ell_p^*)
        \ge\frac{1}{2\sqrt2\,q^2}.
\end{equation}
Thus the pairs $(p^*,\ell_p^*)$ form a robust induced matching.
\end{proposition}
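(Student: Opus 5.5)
The plan is to verify each claim directly from the explicit parametrisation, using \cref{lem:parabola-identity} and \cref{cor:algebraic-matching} for the algebraic input and a short computation for the Euclidean part.

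\emph{Counting and containment.} The map \eqref{eq:A-Y-bijection} with $Y=[q]$ is a bijection $A\times[q]\to\mathcal P_q(A)$, so $|\mathcal P_q(A)|=q|A|$. For $p=(x,y)$ with $a\in A\subset[q^2]$ and $y\in[q]$, we have $y/q\in(0,1]$ and $x/q^2=(a+y^2)/(2q^2)\in(0,1]$, since $a\le q^2$ and $y^2\le q^2$. Hence $p^*\in[0,1]^2$. Since $p\in\mathcal L_p$ (take $t=0$) and $\phi_q$ is linear, $p^*\in\ell_p^*$.

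\emph{Distance formula.} I would push the line equation \eqref{eq:unscaled-line-equation} through $\phi_q$. Writing $(u,v)=\phi_q(X,Y)$, so that $X=q^2v$ and $Y=qu$, the line $\ell_p^*$ is
\[
q^2v-yqu+(y^2-x)=0 .
\]
The coefficient vector $(-yq,q^2)$ has norm $\sqrt{q^4+q^2y^2}$. Evaluating the left-hand side at $p'^*$ gives $\Lambda_p(p')=-D(p,p')$. The point-to-line distance formula then yields \eqref{eq:integer-distance}.

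\emph{Separation.} This is the only step with content. For $p\ne p'$, \cref{cor:algebraic-matching} (applied over $F=\Q$, where the square-difference-free hypothesis over $\Z$ suffices because the relevant differences are $(y'-y)^2$ with $y,y'$ integers) gives $D(p,p')\ne0$. The issue is that $D$ need not be an integer, since $x,x'\in\tfrac12\Z$. However,
\[
2D(p,p')=Q(p)-Q(p')-(y'-y)^2=a-a'-(y'-y)^2\in\Z ,
\]
so $|D(p,p')|\ge 1/2$. For the denominator, $y\le q$ gives $\sqrt{q^4+q^2y^2}\le\sqrt2\,q^2$. Combining,
\[
\dist(p'^*,\ell_p^*)\ge \frac{1/2}{\sqrt2\,q^2}=\frac{1}{2\sqrt2\,q^2},
\]
which is \eqref{eq:HPVZ-separation}.

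The main point to get right is this half-integrality of $D$ together with its nonvanishing. Everything else is bookkeeping.
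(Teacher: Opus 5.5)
Your proposal is correct and matches the paper's proof step for step. The paper counts via the bijection, checks containment, and pushes \eqref{eq:unscaled-line-equation} through $\phi_q$ to get the normal vector $(-yq,q^2)$ with value $-D(p,p')$ at $p'^*$; it then notes that $2D(p,p')=a-a'-(y'-y)^2$ is a nonzero integer for $p\ne p'$, so $|D(p,p')|\ge 1/2$ (your route through \cref{cor:algebraic-matching} over $\Q$ is valid, but the paper applies square-difference-freeness directly to $a-a'=(y'-y)^2$).
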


In particular, \cref{prop:HPVZ-matching} gives $\PL\bigl(q \cdot s(q^2)\bigr)\ge\frac{1}{2\sqrt2\,q^2}$, which recovers \eqref{eq:sN-to-PL}.

\begin{proof}
By design, the parametrisation in the definition of $\mathcal P_q(A)$ is injective, so the point set has $q|A|$ elements.  Also
$1\le y\le q$ and $0<x=\frac{a+y^2}{2}\le q^2$, which proves $p^*\in[0,1]^2$.  The assigned incidence is preserved by
the invertible map $\phi_q$.

Write $(S,T)=\phi_q(X,Y)$.  Under this change of variables, the line
equation \eqref{eq:unscaled-line-equation} becomes
\[
        q^2T-yqS+(y^2-x)=0,
\]
whose normal vector is $(-yq,q^2)$.  Its value at $p'^*$ is
$\Lambda_p(p')=-D(p,p')$.  The point-to-line distance formula therefore
gives \eqref{eq:integer-distance}.

Let $a=Q(p)$ and $a'=Q(p')$.  By Observation \ref{lem:parabola-identity},
\begin{equation}
\label{eq:2D-integer}
        2D(p,p')=a-a'-(y'-y)^2.
\end{equation}
If $D(p,p')=0$, square-difference-freeness forces $p=p'$.  Thus, for $p\ne p'$, the right-hand side
of \eqref{eq:2D-integer} is a nonzero integer, and
$|D(p,p')|\ge1/2$.  Since $y\le q$, the denominator in
\eqref{eq:integer-distance} is at most $\sqrt2q^2$, proving
\eqref{eq:HPVZ-separation}.
\end{proof}

\subsection{A Szemer\'edi--Trotter configuration in the background}

For a brief moment, let us not pass to the square-difference-free set $A \subset [q^2]$ in the construction from the previous section, and consider instead the full set of points $\mathcal P_q:=\mathcal P_q([q^2])$, together with the set of lines $\mathcal L_q:=\{\mathcal L_p:p\in\mathcal P_q\}$.

Note that both sets $\mathcal P_q$ and $\mathcal L_q$ have $q^3$ elements (Indeed, $p=(x,y)$ determines
$a=Q(p)$, while the normalised equation
\eqref{eq:unscaled-line-equation} determines first $y$ and then $x$). Moreover, it turns out that $\mathcal P_{q}$ and $\mathcal L_{q}$ form an extremal Szemer\'edi-Trotter configuration.

\begin{proposition}
\label{prop:ST-grid}
The number of incidences between $\mathcal P_q$ and $\mathcal L_q$ is
\begin{equation}
\label{eq:ST-count}
\begin{aligned}
        I(\mathcal P_q,\mathcal L_q)=\sum_{|h|<q}(q-|h|)(q^2-h^2)=\frac{q^2(5q^2+1)}6
        \asymp q^4.
\end{aligned}
\end{equation}
\end{proposition}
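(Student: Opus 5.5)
The plan is to reduce the incidence count to a lattice-point count using \cref{lem:parabola-identity}, and then evaluate the resulting sum in closed form.

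By the bijection \eqref{eq:A-Y-bijection}, each point of $\mathcal P_q$ corresponds to a unique pair $(a,y)\in[q^2]\times[q]$, with $a=Q(p)$. The half-integrality of $x$ plays no role. For $p\leftrightarrow(a,y)$ and $p'\leftrightarrow(a',y')$, the incidence criterion \eqref{eq:line-incidence-D} together with the identity $Q(p)-Q(p')=2D(p,p')+(y'-y)^2$ gives
\[
        p'\in\mathcal L_p \iff D(p,p')=0 \iff a-a'=(y'-y)^2 .
\]
The second equivalence holds because $D(p,p')=\tfrac12\bigl(a-a'-(y'-y)^2\bigr)$, as in \eqref{eq:2D-integer}. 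So $I(\mathcal P_q,\mathcal L_q)$ is exactly the number of quadruples $(a,a',y,y')\in[q^2]^2\times[q]^2$ satisfying $a-a'=(y'-y)^2$.

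Next, sort these quadruples by $h:=y'-y$, which ranges over $|h|<q$.
\begin{itemize}
\item For fixed $h$, the number of pairs $(y,y')\in[q]^2$ with $y'-y=h$ is $q-|h|$.
\item Independently, the number of pairs $(a,a')\in[q^2]^2$ with $a-a'=h^2$ is $q^2-h^2$. This is nonnegative, and positive, since $h^2<q^2$.
\end{itemize}
Multiplying and summing over $h$ gives the first equality in \eqref{eq:ST-count}.

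Finally, evaluate the sum. Write it as
\[
q^3+2\sum_{h=1}^{q-1}(q-h)(q^2-h^2),
\]
expand $(q-h)(q^2-h^2)=q^3-q^2h-qh^2+h^3$, and apply the standard power-sum formulas for $\sum h$, $\sum h^2$, $\sum h^3$ over $1\le h\le q-1$. This should collapse to $q^2(5q^2+1)/6$. As sanity checks, the formula gives $1$ at $q=1$, and it gives $7$ at $q=2$, matching $4+2\cdot1\cdot3=10$? No: $q^2(5q^2+1)/6=4\cdot21/6=14$, and the direct sum is $2\cdot4+2\cdot(1\cdot3)=14$, so it agrees. The bound $\asymp q^4$ is then immediate.

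The only step needing care is this final algebraic simplification. The combinatorial reduction is mechanical once \cref{lem:parabola-identity} is invoked.
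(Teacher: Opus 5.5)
Your proposal is correct and takes the same approach as the paper's proof: sort incidences by $h=y'-y$, count $q-|h|$ pairs $(y,y')$ and $q^2-h^2$ pairs $(a,a')$, and sum. Your two-sided equivalence $D(p,p')=0\iff a-a'=(y'-y)^2$ even makes the exactness of the count clearer than the paper's one-directional ``requires.'' The only thing left undone is the algebra you said ``should collapse''; it does, since $\sum_{h=1}^{q-1}(q-h)(q^2-h^2)=q^2(q-1)(5q-1)/12$ and hence $q^3+q^2(q-1)(5q-1)/6=q^2(5q^2+1)/6$, and this computation should replace the garbled $q=2$ sanity check.
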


\begin{proof}
Write $a=Q(p)$ and $a'=Q(p')$.  By
\eqref{eq:parabola-incidence}, an incidence $p'\in\mathcal L_p$ with
$h=y'-y$ requires $a-a'=h^2$.  There are $q-|h|$ ordered pairs
$(y,y')\in[q]^2$ with difference $h$, and $q^2-h^2$ ordered pairs
$(a,a')\in[q^2]^2$ with $a-a'=h^2$.  Summing over $|h|<q$ gives the
first expression in \eqref{eq:ST-count}.  Moreover,
\[
\begin{aligned}
        \sum_{|h|<q}(q-|h|)(q^2-h^2)
        &=q^3+2\sum_{h=1}^{q-1}(q-h)(q^2-h^2)\\
        &=q^3+\frac{q^2(q-1)(5q-1)}6\\
        &=\frac{q^2(5q^2+1)}6.
\end{aligned}
\]
\end{proof}

Writing $N=q^3$, we thus have that $I(\mathcal P_q,\mathcal L_q) \asymp N^{4/3}$, thereby $\mathcal P_q$ and $\mathcal L_q$ achieve equality in the Szemer\'edi--Trotter theorem \cite{SzemerediTrotter1983}. Notably, the same configuration also achieves the critical
positive distance scale discussed in \cite[Section 3]{CPZ2023}. Indeed, if $p'\notin\mathcal L_p$, then
$2D(p,p')$ is a nonzero integer, so
\eqref{eq:integer-distance} gives a lower bound of $\asymp q^{-2}$. Therefore
\[
        \min_{\substack{p\in\mathcal P_q,\ L\in\mathcal L_q\\p\notin L}}
        \dist\bigl(\phi_q(p),\phi_q(L)\bigr)
        \asymp q^{-2}=N^{-2/3}.
\]
Thus the unrestricted parabola model is simultaneously a sharp
incidence configuration and a configuration at the critical Euclidean scale. The square-difference-free set restriction is used to only extract an induced matching from it. The next section will essentially replace this sparse integer restriction with a more efficient (codimension one) trace-zero lattice restriction over a totally real field of large degree.

\section{Trace-zero sets}
\label{sec:trace}

We use only standard facts about trace, norm, and the Minkowski embedding
of a number field. Let $K/\Q$ be
a totally real number field of degree $d\ge2$, let $\OK$ be its ring of
integers, and list its real embeddings as $\sigma_1,\ldots,\sigma_d:K\hookrightarrow\R$. Write $\iota(a):=(\sigma_1(a),\ldots,\sigma_d(a)) \in \mathbb{R}^{d}$ for the Minkowski embedding. For $u,v\in K$, define bilinear form $\langle u,v\rangle_{\mathrm{tr}}:=\Tr_{K/\Q}(uv)$. In other words,
\begin{equation}
\label{eq:trace-inner-product}
        \langle u,v\rangle_{\mathrm{tr}}
        :=\sum_{j=1}^d\sigma_j(u)\sigma_j(v)
        =\langle\iota(u),\iota(v)\rangle_{\R^d}.
\end{equation}
We record the following crucial consequence:

\begin{observation}
\label{lem:positive-trace-form}
The map $\langle\cdot,\cdot\rangle_{\mathrm{tr}}$ extends to a positive-definite inner product on
$K\otimes_{\Q}\R$, and its associated quadratic form satisfies
\begin{equation}
\label{eq:positive-trace-form}
        q_K(z):=\Tr_{K/\Q}(z^2)
        =\|\iota(z)\|_2^2>0
        \qquad(z\ne0).
\end{equation}
\end{observation}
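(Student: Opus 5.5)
The plan is to extend the bilinear form $\R$-linearly through the Minkowski embedding and then read positivity off the identity \eqref{eq:trace-inner-product}. Throughout, I use only two standard facts about a number field of degree $d$: the embeddings $\sigma_1,\ldots,\sigma_d$ are exactly the $d$ field embeddings $K\hookrightarrow\R$, and $\Tr_{K/\Q}(w)=\sum_j\sigma_j(w)$.

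First, I identify $K\otimes_{\Q}\R$ with $\R^d$ via $\iota\otimes\mathrm{id}$. Since $K$ is totally real, this is the standard isomorphism of $\R$-algebras
\[
K\otimes_{\Q}\R\cong\R[X]/(f(X))\cong\prod_{j=1}^d\R,
\]
where $f$ is the minimal polynomial of a primitive element. The isomorphism holds because $f$ has $d$ distinct real roots, and by the Chinese remainder theorem. Concretely, the image $\iota(K)$ spans $\R^d$ and contains a $\Q$-basis of $K$ mapped to an $\R$-basis of $\R^d$. Independence of this basis over $\R$ follows from the nonvanishing of the discriminant $\det(\sigma_j(\omega_i))^2\ne0$ for any $\Q$-basis $\omega_1,\ldots,\omega_d$.

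Second, I compute $\Tr_{K/\Q}(uv)=\sum_j\sigma_j(uv)=\sum_j\sigma_j(u)\sigma_j(v)$, using that each $\sigma_j$ is a ring homomorphism. This gives $\langle u,v\rangle_{\mathrm{tr}}=\langle\iota(u),\iota(v)\rangle_{\R^d}$ on $K$. The right-hand side is the restriction of the standard Euclidean inner product under the identification above, so it extends $\R$-bilinearly and uniquely to $K\otimes_{\Q}\R\cong\R^d$. The extension is positive definite because the Euclidean one is.

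Finally, for $z\in K$ I get $q_K(z)=\sum_j\sigma_j(z)^2=\|\iota(z)\|_2^2$. This is strictly positive when $z\ne0$, because each $\sigma_j$ is injective (it is a field homomorphism), so $\sigma_1(z)\ne0$. The only slightly delicate step is the identification of $K\otimes_{\Q}\R$ with $\R^d$, i.e.\ the nondegeneracy of the discriminant. Note, however, that the displayed inequality \eqref{eq:positive-trace-form} itself needs only the injectivity of a single real embedding.
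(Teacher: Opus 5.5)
Your proposal is correct and is essentially the paper's argument: the paper reads the observation off directly from the identity $\Tr_{K/\Q}(uv)=\langle\iota(u),\iota(v)\rangle_{\R^d}$ in \eqref{eq:trace-inner-product}, using total reality. You additionally spell out the standard identification $K\otimes_{\Q}\R\cong\R^d$ via the nonvanishing discriminant, which the paper leaves implicit.
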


This is where the total reality of $K$ comes in. Since
$\Tr_{K/\Q}(a)=\langle a,1\rangle_{\mathrm{tr}}$, the trace-zero subspace is the orthogonal
complement of $1$.  Equivalently, under the Minkowski embedding it is
the hyperplane $X_1+\cdots+X_d=0$. On the other hand, $\iota(z^2)=(\sigma_1(z)^2,\ldots,\sigma_d(z)^2)\in\R_{\ge0}^d$, and this vector lies in the trace-zero hyperplane only when $z=0$. Thus
\begin{equation}
\label{eq:trace-zero-no-squares}
        \ker(\Tr_{K/\Q})\cap\{z^2:z\in K\}=\{0\}.
\end{equation}
Geometrically, every nonzero square lies in the open half-space
$\Tr_{K/\Q}>0$, whose boundary is the trace-zero hyperplane.

For $X\ge0$, let us now define the symmetric Minkowski box of radius $X$
\begin{equation}
\label{eq:BK}
        B_K(X)
        :=
        \{a\in\OK:|\sigma_j(a)|\le X\text{ for }1\le j\le d\}.
\end{equation}

One of the main new ideas in this construction is to consider the trace-zero lattice
\begin{equation}
\label{eq:trace-lattice}
        \Lambda_K^0
        :=
        \{a\in2\OK:\Tr_{K/\Q}(a)=0\}.
\end{equation}
The additive group $2\OK$ has rank $d$, and the trace map is nonzero because $\Tr_{K/\Q}(2)=2d$.  Hence $\Lambda_K^0$ has rank $d-1$ (it lives in the trace-zero hyperplane
$x_1+\cdots+x_d=0$).  

For two parameters $R,M\ge1$ (to be balanced appropriately later), let us denote
\begin{equation}
\label{eq:AK-YK}
        A_K(R):=\Lambda_K^0\cap B_K(R),
        \qquad
        Y_K(M):=2\OK\cap B_K(M).
\end{equation}
We record two important facts about $A_{K}(R)$ and $Y_{K}(M)$. 
\begin{proposition}
\label{prop:trace-sets}
The set $A_K(R)$ is square-difference-free. Moreover, for every $R,M\ge1$,
\begin{equation}
\label{eq:trace-counts}
        |A_K(R)|\asymp_K R^{d-1},
        \qquad
        |Y_K(M)|\asymp_K M^d.
\end{equation}
\end{proposition}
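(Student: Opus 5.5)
The proof splits into the algebraic claim (square-difference-freeness) and the two lattice-point counts.

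\emph{Square-difference-freeness.} Let $a,a'\in A_K(R)$ with $a-a'=t^2$ for some $t\in K^\times$. Since $\Tr_{K/\Q}$ is additive, $\Tr_{K/\Q}(a-a')=0$. Then \eqref{eq:trace-zero-no-squares} forces $t^2=0$, contradicting $t\ne0$. Concretely, $\Tr_{K/\Q}(t^2)=\|\iota(t)\|_2^2>0$ by \cref{lem:positive-trace-form}. Thus $A_K(R)$ is square-difference-free in the sense of \cref{cor:algebraic-matching}, with $F=K$. This step is immediate.

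\emph{Counting $Y_K(M)$.} Under $\iota$, $2\OK$ becomes a full-rank lattice $L\subset\R^d$ with covolume $2^d\sqrt{|\mathrm{disc}\,K|}$. The set $\iota(Y_K(M))$ is then $L\cap[-M,M]^d$. For the upper bound, I would fix a fundamental parallelepiped $\Pi$ of $L$ with diameter $\delta_K$. The translates $\iota(a)+\Pi$ are disjoint and lie in $[-M-\delta_K,M+\delta_K]^d$, so $|Y_K(M)|\le (2M+2\delta_K)^d/\mathrm{covol}(L)\lesssim_K M^d$ for $M\ge1$. For the lower bound, every point of $[-M+\delta_K,M-\delta_K]^d$ lies in some translate $\iota(a)+\Pi$ with $a\in Y_K(M)$. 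This gives $|Y_K(M)|\gtrsim_K M^d$ once $M\ge 2\delta_K$. For $1\le M<2\delta_K$ the bound still holds because $0\in Y_K(M)$, so the count is at least $1\gtrsim_K M^d$ on this bounded range.

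\emph{Counting $A_K(R)$.} The image $\iota(\Lambda_K^0)$ is a lattice of rank $d-1$ in the hyperplane $H=\{X_1+\cdots+X_d=0\}$. It spans $H$: it is the kernel of a nonzero $\Z$-linear functional on the rank-$d$ lattice $L$, so its rank is $d-1$, and $H$ is the only $(d-1)$-dimensional subspace containing it, since it lies in $H$. It is discrete as a subgroup of $L$, so it is a full lattice in $H\cong\R^{d-1}$. We have $\iota(A_K(R))=\iota(\Lambda_K^0)\cap([-R,R]^d\cap H)$. The region $[-R,R]^d\cap H$ is $R$ times the fixed convex body $C=[-1,1]^d\cap H$. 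This body is symmetric and has nonempty interior in $H$, because it contains a ball around $0$ in $H$ of radius $1/\sqrt d$, say. The same fundamental-domain argument inside $H$ then gives $|A_K(R)|\asymp_K R^{d-1}$. For the upper bound, translates of a fundamental domain of diameter $\delta'_K$ lie inside $(R+\delta'_K)\cdot$ a slightly enlarged body. For the lower bound, one uses the inner ball: $RC$ contains a ball of radius $R/\sqrt d$ in $H$, which contains $\gtrsim_K R^{d-1}$ lattice points once $R$ exceeds a constant depending on $K$. Small $R\ge1$ are again covered by $0\in A_K(R)$.

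The main point needing care is the rank and fullness of $\Lambda_K^0$ inside $H$, together with the uniformity for small $R$ and $M$. Both are handled as above, so I expect no genuine obstacle. All implied constants depend only on $K$, in particular on the discriminant and on the fixed shape of $C$.
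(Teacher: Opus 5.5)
Your proposal is correct and follows the same route as the paper. Square-difference-freeness comes from the positivity of $\Tr_{K/\Q}(t^2)$ against the trace-zero hyperplane, and the counts come from $\Lambda_K^0$ having rank $d-1$ (as the kernel of the nonzero trace on $2\OK$) and $2\OK$ having rank $d$.

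The differences are minor. You prove, via a fundamental-domain volume argument that also covers the bounded range of small $R,M$, the lattice-counting fact that the paper only quotes. You also allow $t\in K^\times$ rather than only $z\in\OK$, which matches the paper's stated definition of square-difference-free exactly.
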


\begin{proof}
Let's first check the square-difference-freeness. Suppose that $a-a'=z^2$ with $a,a'\in A_K(R)$ and $z\in\OK$.  Then
\[
        0
        =\Tr_{K/\Q}(a-a')
        =\Tr_{K/\Q}(z^2)
        =q_K(z).
\]
By the positive definiteness from Observation \ref{lem:positive-trace-form}, one has $z=0$, and therefore $a=a'$. 

The size estimates for $A_{K}(R)$ and $Y_{K}(M)$ immediately follow from the following standard fact about lattices:

\begin{fact}
\label{lem:lattice-count}
Let $\Lambda\subset\R^m$ be a lattice of rank $r\ge1$.  Then
\[
        |\Lambda\cap[-T,T]^m|\asymp_{\Lambda}T^r
\]
for every $T\ge1$.
\end{fact}

Therefore, $|A_K(R)|\asymp_K R^{d-1}$ comes from the fact that the lattice $\Lambda_K^0$ has rank $d-1$, whereas $|Y_K(M)|\asymp_K M^d$ follows from the fact that $2\OK$ has full rank $d$.

\end{proof}

The fixed factor $2$ in \eqref{eq:trace-lattice} only clears the
possible half-integral first coordinate in \eqref{eq:A-Y-bijection}.
If $a,y\in2\OK$, then $x=\frac{a+y^2}{2}\in\OK$. Consequently, for two points $p=(x,y)$ and $p'=(x',y')$ arising from
$A\times Y$, the quantity
\[
        D(p,p')
        =x-x'+y(y'-y)
        =\frac{a-a'-(y'-y)^2}{2}
\]
is also an algebraic integer.  Passing from $\OK$ to the finite-index
sublattice $2\OK$ has no effect on any exponent.

\begin{remark}
\label{rem:trace-paraboloid}
The square-difference-free property has a direct geometric formulation.
In $K^2$, consider the trace level set
\[
        \mathcal S_K
        :=\{(x,y)\in K^2:\Tr_{K/\Q}(Q(x,y))=0\}.
\]
Through every $p=(x,y)\in\mathcal S_K$, take the affine $K$-line $\left\{ p+t(y,1):\ t\in K\right\}$. By Observation \ref{lem:parabola-identity},
\[
        \Tr_{K/\Q}\bigl(Q(p+t(y,1))\bigr)
        =-\Tr_{K/\Q}(t^2).
\]
Furthermore, by Observation \ref{lem:positive-trace-form}, this vanishes only when $t=0$. Thus every assigned line meets the trace level set in a unique
$K$-rational point.  The next section converts this algebraic uniqueness into a quantitative separation statement, after appropriately projecting onto the real plane.
\end{remark}

\section{Point-line incidence configurations from number fields}
\label{sec:lift}

Fix a totally real field $K$ of degree $d$ and distinguish one embedding $\sigma:K\hookrightarrow\R$. The main purpose of this section is to establish the following number field analogue of Proposition \ref{prop:HPVZ-matching}.

\begin{proposition}
\label{prop:nf-lift}
Let $A\subset2\OK\cap B_K(R)$ be square-difference-free, and let
$Y\subset2\OK\cap B_K(M)$, where $R,M\ge1$. Define
\[
        \mathcal P(A,Y)
        :=
        \left\{
        \left(\frac{a+y^2}{2},y\right):
        a\in A,\ y\in Y
        \right\}
        \subset\OK^2,
\]
and put
\begin{equation}
\label{eq:N-H}
        N:=\frac{R+M^2}{2},
        \qquad
        H:=R+2M^2.
\end{equation}
Then one can associate to each $p\in\mathcal P(A,Y)$ a point
$p^*\in[0,1]^2$ and a real line $\ell_p^*$ through $p^*$ such that,
for distinct $p,p'\in\mathcal P(A,Y)$,
\[
        \dist(p'^*,\ell_p^*)
        \ge
        \frac{H^{-(d-1)}}{2\sqrt{N^2+M^4}}.
\]
\end{proposition}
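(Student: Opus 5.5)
We mirror the proof of \cref{prop:HPVZ-matching}, with the distinguished embedding $\sigma$ playing the role of the inclusion $\Z\subset\R$.

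\textbf{The real configuration.} For $p=(x,y)\in\mathcal P(A,Y)$ set $\tilde p:=(\sigma(x),\sigma(y))\in\R^2$. Let $\tilde\ell_p$ be the real line $\{(\sigma(x)+t\sigma(y),\sigma(y)+t):t\in\R\}$, which contains $\tilde p$. We then rescale anisotropically by an affine map
\[
\phi(X,Y):=\Bigl(\tfrac{Y+M^2}{2M^2},\ \tfrac{X+N}{2N}\Bigr)
\]
or a similar map, and put $p^*:=\phi(\tilde p)$ and $\ell_p^*:=\phi(\tilde\ell_p)$.

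Since $|\sigma(y)|\le M$ and $|\sigma(a)|\le R$, we get $|\sigma(x)|\le (R+M^2)/2=N$. So after the shift and scaling $p^*\in[0,1]^2$. The normalizations should be chosen so that the denominator of the distance formula becomes $\asymp\sqrt{N^2+M^4}$; the precise constants will be tuned against the target bound.

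\textbf{The distance formula.} Apply $\sigma$ to the line equation \eqref{eq:unscaled-line-equation}, giving $X-\sigma(y)Y+\sigma(y^2-x)=0$, and rewrite it in the coordinates $(S,T)=\phi(X,Y)$. Its value at $p'^*$ is $\sigma(\Lambda_p(p'))=-\sigma(D(p,p'))$. The normal vector has the shape $(c_1M^2\sigma(y),\,c_2N)$, whose length is at most a constant times $\sqrt{N^2+M^4}$ because $|\sigma(y)|\le M$ (this is the analogue of $\sqrt{q^4+q^2y^2}$). Hence
\[
\dist(p'^*,\ell_p^*)\ \ge\ \frac{|\sigma(D(p,p'))|}{2\sqrt{N^2+M^4}},
\]
up to the constant fixed by the choice of $\phi$.

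\textbf{The norm argument (main step).} It remains to show $|\sigma(D)|\ge H^{-(d-1)}$ for $p\ne p'$, where $D=D(p,p')$.

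First, $D\ne0$. By \eqref{eq:line-incidence-D} and \cref{cor:algebraic-matching} applied with $F=K$, the relation $D=0$ would force $p=p'$, since $A$ is square-difference-free in $K$.

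Second, $D\in\OK$. This follows from $2D=a-a'-(y'-y)^2$ together with $a,a',y,y'\in 2\OK$, as noted after \cref{prop:trace-sets}.

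Hence $\Norm_{K/\Q}(D)$ is a nonzero rational integer, so $\prod_j|\sigma_j(D)|\ge1$. For every embedding $\sigma_j$ we have
\[
|\sigma_j(2D)|\le 2R+4M^2=2H,
\]
so $|\sigma_j(D)|\le H$. Isolating the factor at $\sigma$ gives
\[
|\sigma(D)|\ \ge\ \prod_{\sigma_j\ne\sigma}|\sigma_j(D)|^{-1}\ \ge\ H^{-(d-1)}.
\]

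The only real obstacle is bookkeeping. One must choose $\phi$ so that the point set lands in $[0,1]^2$ while the normal-vector length stays bounded by a constant multiple of $\sqrt{N^2+M^4}$ that yields exactly the factor $\tfrac12$. If shifting by $N$ and $M^2$ costs an extra factor of $2$, it can be absorbed into the $H$-bound through the slack $|\sigma_j(D)|\le R/2+\dots$. The algebraic input itself is routine once nonvanishing and integrality of $D$ are in place.
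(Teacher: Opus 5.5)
Your route is the paper's: embed via $\sigma$, rescale anisotropically, read the distance off the line equation, then use $D\in\OK\setminus\{0\}$ and $|\Norm_{K/\Q}(D)|\ge1$ to get $|\sigma(D)|\ge H^{-(d-1)}$. The norm step is correct as written. The gap is in the step you deferred as bookkeeping. With the map you actually wrote, $\phi(X,Y)=\bigl(\tfrac{Y+M^2}{2M^2},\tfrac{X+N}{2N}\bigr)$, the line becomes $-2M^2\sigma(y)S+2NT+\mathrm{const}=0$. Its normal $(-2M^2\sigma(y),2N)$ has length up to $2\sqrt{N^2+M^6}$, not $O(\sqrt{N^2+M^4})$. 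So the inference ``length $\lesssim\sqrt{N^2+M^4}$ because $|\sigma(y)|\le M$'' is false when the first coordinate has size $M^2\sigma(y)$. Geometrically, you have squeezed the $y$-range $[-M,M]$ into a strip of width $1/M$; for $R=M^2$ this costs a factor $\asymp M$, not a constant.

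Your fallback of absorbing a lost constant into $H$ also fails. The triangle inequality on $2D=a-a'-(y'-y)^2$ gives exactly $|\sigma_j(D)|\le R+2M^2=H$: the term $|\sigma_j(a-a')|/2$ can be as large as $R$, not $R/2$, and the two contributions can reinforce each other. So there is no slack, and the denominator has to come out as exactly $2\sqrt{N^2+M^4}$.

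The fix is to normalize $y$ by its true range, as in the paper's $\Phi_{M,N}(X,Y)=\bigl(\tfrac{Y+M}{2M},\tfrac{X+N}{2N}\bigr)$. Here $M$ plays the role of $q$ and $N$ that of $q^2$ in \cref{prop:HPVZ-matching}. Then $\ell_p^*$ has direction $v_p=\bigl(\tfrac1{2M},\tfrac{\sigma(y)}{2N}\bigr)$ and $p'^*-p^*=\bigl(\tfrac{\sigma(y'-y)}{2M},\tfrac{\sigma(x'-x)}{2N}\bigr)$. Hence $\det(p'^*-p^*,v_p)=\sigma(D(p,p'))/(4MN)$ and $\|v_p\|_2=\sqrt{N^2+M^2\sigma(y)^2}/(2MN)$, which gives the exact identity
\[
\dist(p'^*,\ell_p^*)=\frac{|\sigma(D(p,p'))|}{2\sqrt{N^2+M^2\sigma(y)^2}}\ge\frac{|\sigma(D(p,p'))|}{2\sqrt{N^2+M^4}}.
\]
The factor $\tfrac12$ comes out exactly, with no slack needed, and the points still land in $[0,1]^2$ since $|\sigma(y)|\le M$ and $|\sigma(x)|\le N$. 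Combined with your bound $|\sigma(D)|\ge H^{-(d-1)}$, this is precisely the proposition.
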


\begin{proof}
The parametrization defining $\mathcal P(A,Y)$ is injective, and hence
$|\mathcal P(A,Y)|=|A||Y|$. Let
\[
        p=(x,y)
        =\left(\frac{a+y^2}{2},y\right)
        \in\mathcal P(A,Y).
\]
Since $a,y\in2\OK$, we have $x\in\OK$. For every real embedding $\rho:K\hookrightarrow\R$, we therefore have
\[
        |\rho(x)|
        \le\frac{|\rho(a)|+|\rho(y)|^2}{2}
        \le\frac{R+M^2}{2}
        =N.
\]
So $(\sigma(x),\sigma(y))$ lies in
$[-N,N]\times[-M,M]$.

Let $L_p^\sigma$ be the real line through
$(\sigma(x),\sigma(y))$ with direction $(\sigma(y),1)$, and define
\begin{equation}
\label{eq:Phi-MN}
        \Phi_{M,N}(X,Y)
        :=\left(\frac{Y+M}{2M},\frac{X+N}{2N}\right).
\end{equation}
This is the symmetric-box version of the previous anisotropic scaling
$(x,y)\mapsto(y/M,x/N)$.  Set
\begin{equation}
\label{eq:scaled-pairs}
        p^*:=\Phi_{M,N}(\sigma(x),\sigma(y)),
        \qquad
        \ell_p^*:=\Phi_{M,N}(L_p^\sigma).
\end{equation}
Then $p^*\in[0,1]^2$ and $p^*\in\ell_p^*$.

Take $p=(x,y)$ and $p'=(x',y')$ in $\mathcal P(A,Y)$, and write
$a=Q(p)$ and $a'=Q(p')$.  The line-equation value from
\eqref{eq:D-definition} is
\begin{equation}
\label{eq:algebraic-D}
\begin{aligned}
        D(p,p') =x-x'+y(y'-y)=\frac{a-a'-(y'-y)^2}{2}\in\OK.
\end{aligned}
\end{equation}
The inclusion in $\OK$ follows from $x,x',y,y'\in\OK$.  If
$D(p,p')=0$, then $a-a'=(y'-y)^2$. Square-difference-freeness gives $a=a'$, hence $y'=y$, and then
$x'=x$.  Therefore
\begin{equation}
\label{eq:D-nonzero}
        p\ne p'
        \quad\Longrightarrow\quad
        D(p,p')\ne0.
\end{equation}

For every real embedding $\rho$, the second expression in
\eqref{eq:algebraic-D} gives
\[
        |\rho(D(p,p'))|
        \le
        \frac{|\rho(a-a')|+|\rho(y'-y)|^2}{2}
        \le R+2M^2
        =H.
\]
When $p\ne p'$, the algebraic integer $D(p,p')$ is nonzero.  Its norm is
therefore a nonzero rational integer, and
\[
        1
        \le|\Norm_{K/\Q}(D(p,p'))|
        =|\sigma(D(p,p'))|
         \prod_{\rho\ne\sigma}|\rho(D(p,p'))|
        \le|\sigma(D(p,p'))|H^{d-1}.
\]
Consequently,
\begin{equation}
\label{eq:D-lower}
        |\sigma(D(p,p'))|\ge H^{-(d-1)}.
\end{equation}

Under $\Phi_{M,N}$, it is easy to check that the direction vector for the line $\ell_p^*$ is given by $v_p=\left(\frac1{2M},\frac{\sigma(y)}{2N}\right)$,
whereas $p'^*-p^* =\left( \frac{\sigma(y'-y)}{2M}, \frac{\sigma(x'-x)}{2N}\right)$. The determinant formula for point-to-line distance then yields the exact identity
\begin{equation}
\label{eq:nf-distance-exact}
\begin{aligned}
        \dist(p'^*,\ell_p^*)=\frac{|\det(p'^*-p^*,v_p)|}{\|v_p\|_2}=\frac{|\sigma(D(p,p'))|}
                {2\sqrt{N^2+M^2\sigma(y)^2}}.
\end{aligned}
\end{equation}
Combining \eqref{eq:D-lower} with $|\sigma(y)|\le M$ proves Proposition \ref{prop:nf-lift}.
\end{proof}

The natural choice is to balance the two algebraic scales by taking
$R=M^2$.

\begin{corollary}
\label{cor:balanced-construction}
Let $M\ge1$, and take
\[
        A=A_K(M^2),
        \qquad
        Y=Y_K(M).
\]
Then the construction of \cref{prop:nf-lift} contains  $|A||Y|\asymp_K M^{3d-2}$ incident point--line pairs in $[0,1]^2$, and every off-diagonal distance
is at least $(3M^2)^{-d}$. 
\end{corollary}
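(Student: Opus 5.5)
The corollary is a direct specialization of \cref{prop:nf-lift} combined with the counting statements in \cref{prop:trace-sets}. The plan is to plug in $R=M^2$, count the pairs, and then simplify the separation bound.

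\textbf{Checking the hypotheses.} By \cref{prop:trace-sets}, $A=A_K(M^2)=\Lambda_K^0\cap B_K(M^2)$ is square-difference-free. It is contained in $2\OK\cap B_K(R)$ with $R=M^2\ge1$. Likewise $Y=Y_K(M)=2\OK\cap B_K(M)$. So \cref{prop:nf-lift} applies with $R=M^2$.

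\textbf{Counting.} The map $(a,y)\mapsto\bigl(\tfrac{a+y^2}{2},y\bigr)$ is injective, so we get $|A||Y|$ points, each with its own line through it. By \eqref{eq:trace-counts},
\[
|A||Y|\asymp_K (M^2)^{d-1}M^d=M^{3d-2}.
\]
The resulting $p^*$ lie in $[0,1]^2$, and $p^*\in\ell_p^*$, by \cref{prop:nf-lift}.

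\textbf{Separation.} With $R=M^2$, \eqref{eq:N-H} gives $N=M^2$ and $H=3M^2$. Hence
\[
\sqrt{N^2+M^4}=\sqrt2\,M^2,
\]
and the bound of \cref{prop:nf-lift} becomes
\[
\frac{(3M^2)^{-(d-1)}}{2\sqrt2\,M^2}.
\]
It remains to compare this with $(3M^2)^{-d}=(3M^2)^{-(d-1)}\cdot\frac{1}{3M^2}$. Since $2\sqrt2<3$, we have $\frac{1}{2\sqrt2 M^2}\ge\frac{1}{3M^2}$. So every off-diagonal distance is at least $(3M^2)^{-d}$.

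There is no real obstacle here; the only point needing care is this constant comparison $2\sqrt2\le 3$.
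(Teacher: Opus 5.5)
Your proposal is correct and matches the paper's proof. You set $R=M^2$, so that $N=M^2$ and $H=3M^2$ in \cref{prop:nf-lift}, count pairs via \cref{prop:trace-sets}, and absorb the constant $\frac{1}{2\sqrt2}\ge\frac13$ into $(3M^2)^{-d}$.
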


\begin{proof}
The count follows from \cref{prop:trace-sets}.  With $R=M^2$, the
quantities in \eqref{eq:N-H} are $N=M^2$ and $H=3M^2$.  Hence
\cref{prop:nf-lift} gives
\[
\begin{aligned}
        \dist(p'^*,\ell_p^*) \ge
        \frac{(3M^2)^{-(d-1)}}{2\sqrt{2M^4}}=\frac{3}{2\sqrt2}(3M^2)^{-d}\ge(3M^2)^{-d}.
\end{aligned}
\]
\end{proof}

We are now ready to derive Theorem \ref{thm:main}.

\begin{proof}[Proof of \cref{thm:main}]
\label{sec:sharp}

Fix $\eps>0$, and choose an odd prime $p$ so large that $\frac{8}{9p-21}<\frac{\eps}{2}$. Let $\zeta_p=e^{2\pi i/p}$ be a primitive $p$th root of unity, and consider the maximal real subfield $K:=\Q(\zeta_p+\zeta_p^{-1}) \subset\Q(\zeta_p)$. The conjugates of $\zeta_p+\zeta_p^{-1}$ are the real numbers
\[
        \zeta_p^j+\zeta_p^{-j}
        =2\cos\left(\frac{2\pi j}{p}\right),
        \qquad 1\le j\le p-1,
\]
so $K$ is totally real. Moreover, since complex conjugation has order two in $\operatorname{Gal}(\Q(\zeta_p)/\Q)$, the field $K$ has degree $d:=[K:\Q]=\frac{p-1}{2}$. See, for example, \cite{Washington1997}. Applying \cref{cor:balanced-construction} to this fixed field $K$
produces a point--line incidence configuration of size
$n\asymp_K M^{3d-2}$ whose off-diagonal distances are at least
$(3M^2)^{-d}$. Consequently,
\[
        \PL(n)\ge (3M^2)^{-d} \asymp_{K} n^{-\theta_d},\ \ \text{where}\ \ \theta_d = \frac23 + \frac{4}{9d-6}.
\]
Since $2/3+\eps-\theta_d>\eps/2>0$, the fixed constant $c_K$ is absorbed by the remaining power of $n$, which yields $\PL(n)\ge n^{-2/3-\eps}$. This completes the proof of Theorem \ref{thm:main}.
\end{proof}

\section{Induced matchings over prime fields}
\label{sec:split-primes}

The integral nature of the construction above allows us to pass back to finite fields. For a prime power $q$, recall that
$\operatorname{IM}(2,q)$ denotes the maximum size of an induced matching in the point--line incidence graph of $\mathbb F_q^2$. For any fixed prime $r\ge5$, we will show that
\begin{equation}
\label{eq:cyclotomic-IM-intro}
        \operatorname{IM}(2,q)
        \gtrsim_r q^{3/2-\frac{2}{r-1}}
\end{equation}
for every sufficiently large prime $q\equiv\pm1\pmod r$. By the prime number theorem in arithmetic progressions, these primes have relative density $2/(r-1)$, so this will directly imply Theorem \ref{thm:prime-field-matching}. 

We first isolate the main reduction mechanism for an arbitrary fixed totally real field. Let $K/\Q$ be a fixed totally real field of degree $d\ge2$. Let $M\ge1$, and set $A:=A_K(M^2)$ and $Y:=Y_K(M)$, like in Corollary \ref{cor:balanced-construction}.

\begin{proposition}
\label{prop:finite-field-reduction}
Suppose that $q$ is a rational prime for which there is a prime ideal
$\mathfrak q\subset\OK$ with $\OK/\mathfrak q\cong\mathbb F_q$ and $q>(3M^2)^d$. Then,
\[
        \operatorname{IM}(2,q)
        \ge |A||Y| \gtrsim_K M^{3d-2}.
\]
\end{proposition}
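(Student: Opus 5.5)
The plan is to reduce the integral construction of \cref{cor:balanced-construction} modulo $\mathfrak q$ and check that no incidences are created or destroyed. Let $\pi:\OK\to\OK/\mathfrak q\cong\mathbb F_q$ be the reduction map. Since $q>(3M^2)^d\ge 3$, $q$ is odd, so $2$ is invertible in $\mathbb F_q$. For each $p=(x,y)\in\mathcal P(A,Y)\subset\OK^2$ I will take the point $\bar p:=(\pi(x),\pi(y))\in\mathbb F_q^2$ and the line $\bar{\mathcal L}_p:=\bar p+\mathbb F_q(\pi(y),1)$. This is the reduction of the affine equation \eqref{eq:unscaled-line-equation}, so $\bar p\in\bar{\mathcal L}_p$. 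As in Observation \ref{lem:parabola-identity}, the value of the reduced normalized equation $\Lambda_p$ at $\bar p'$ is $-\pi(D(p,p'))$. Hence
\[
\bar p'\in\bar{\mathcal L}_p\iff \pi(D(p,p'))=0\iff D(p,p')\in\mathfrak q .
\]

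The key step is to show $D(p,p')\notin\mathfrak q$ whenever $p\ne p'$. By \eqref{eq:D-nonzero}, $D:=D(p,p')$ is a nonzero algebraic integer. From the estimate in the proof of \cref{prop:nf-lift}, with $R=M^2$ (so $H=3M^2$), every real embedding satisfies $|\rho(D)|\le 3M^2$. Therefore
\[
1\le|\Norm_{K/\Q}(D)|\le(3M^2)^d<q .
\]
If $D\in\mathfrak q$, then $D\OK\subseteq\mathfrak q$, so $\mathfrak q\mid D\OK$. Since $|\Norm_{K/\Q}(D)|=[\OK:D\OK]$ is divisible by $\Norm(\mathfrak q)=|\OK/\mathfrak q|=q$, this gives $q\le|\Norm_{K/\Q}(D)|$, a contradiction. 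So off-diagonal pairs are never incident mod $\mathfrak q$.

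Finally I must check that the map $p\mapsto\bar p$ is injective, so that we really obtain $|A||Y|$ distinct pairs. This also follows from the previous step: if $\bar p=\bar p'$ with $p\ne p'$, then $\bar p'=\bar p\in\bar{\mathcal L}_p$, contradicting the nonincidence just shown. Thus the pairs $(\bar p,\bar{\mathcal L}_p)$ form an induced matching of size $|A||Y|$ in $\mathbb F_q^2$. The bound $|A||Y|\gtrsim_K M^{3d-2}$ is the count already given in \cref{cor:balanced-construction}, via \cref{prop:trace-sets}.

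The only real obstacle is the norm-versus-ideal divisibility step. It is standard, using multiplicativity of the ideal norm together with $|\Norm_{K/\Q}(D)|=\Norm(D\OK)$, and the hypothesis $q>(3M^2)^d$ is chosen precisely to make it work. Everything else is formal reduction of the identities from \cref{sec:parabola}, which hold over any field of characteristic not $2$.
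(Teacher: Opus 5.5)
Your proposal is correct and follows essentially the same route as the paper. You reduce the points and the tangent lines modulo $\mathfrak q$, identify an off-diagonal incidence with $D(p,p')\in\mathfrak q$, and rule this out because $q$ would then divide $\Norm_{K/\Q}(D(p,p'))$, whereas $0<|\Norm_{K/\Q}(D(p,p'))|\le(3M^2)^d<q$; distinctness of the reduced points (and likewise of the reduced lines) then follows from the absence of off-diagonal incidences, exactly as in the paper.
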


\begin{proof}
Consider the unscaled point set
\[
        \mathcal P_M
        :=
        \mathcal P(A,Y)
        =
        \left\{
        p_{a,y}
        :=
        \left(\frac{a+y^2}{2},y\right):
        a\in A,\ y\in Y
        \right\}
        \subset\OK^2.
\]
To $p=(x,y)\in\mathcal P_M$, assign the affine $K$-line $\mathcal L_p:= \{(x+ty,y+t):t\in K\}$. Fix an isomorphism $\OK/\mathfrak q\cong\mathbb F_q$, and write
$\overline z$ for the residue class of $z\in\OK$. Reduce the point and
line by setting
\[
        \overline p:=(\overline x,\overline y) \in \mathbb{F}_{q}^{2},
        \qquad
        \overline{\mathcal L}_p
        :=
        \{
        (\overline x+t\overline y,\overline y+t):
        t\in\mathbb F_q
        \} \subset \mathbb{F}_{q}^{2}.
\]
The direction vector $(\overline y,1)$ is nonzero, so
$\overline{\mathcal L}_p$ is an affine line, and
$\overline p\in\overline{\mathcal L}_p$. We claim that $\left\{\overline{p},\overline{\mathcal L}_p\right\}_{p \in \mathcal{P}_{M}}$ form an induced matching in the point-line incidence graph of $\mathbb{F}_{q}^{2}$. 

Take
distinct $p=(x,y)$ and $p'=(x',y')$ in $\mathcal P_M$, and write
$a=Q(p)$ and $a'=Q(p')$. Recall from \eqref{eq:algebraic-D} that
\[
\begin{aligned}
        D(p,p'):=x-x'+y(y'-y)=\frac{a-a'-(y'-y)^2}{2}\in\OK.
\end{aligned}
\]
Like before, this algebraic integer is nonzero and we know that for every real embedding $\rho:K\hookrightarrow\R$,
\[
\begin{aligned}
        |\rho(D(p,p'))| \le \frac{|\rho(a-a')|+|\rho(y'-y)|^2}{2}\le
        \frac{2M^2+4M^2}{2}
        =3M^2.
\end{aligned}
\]
Consequently,
\begin{equation}
\label{eq:finite-field-norm-bound}
        0<
        \left|\Norm_{K/\Q}\bigl(D(p,p')\bigr)\right|
        \le(3M^2)^d
        <q.
\end{equation}

The line equation \eqref{eq:unscaled-line-equation} remains valid
over the residue field and takes the value
$-\overline{D(p,p')}$ at $\overline p'$. Hence
\begin{equation}
\label{eq:reduced-D-incidence}
        \overline p'\in\overline{\mathcal L}_p
        \quad\Longleftrightarrow\quad
        D(p,p')\in\mathfrak q.
\end{equation}
If the latter inclusion held, then $\mathfrak q$ would divide the
principal ideal $(D(p,p'))$. Taking ideal norms would give
\[
        q=\Norm(\mathfrak q)
        \ \bigm|\
        \left|\Norm_{K/\Q}\bigl(D(p,p')\bigr)\right|,
\]
contradicting \eqref{eq:finite-field-norm-bound}. See, for example,
\cite[Chapter~I]{Neukirch1999} for these standard norm facts. Therefore every off-diagonal incidence remains absent after reduction.

This also shows that the reduced points are pairwise distinct. Indeed,
if $\overline p=\overline p'$, then
$\overline p'\in\overline{\mathcal L}_p$. Likewise, the reduced lines
are pairwise distinct: equality
$\overline{\mathcal L}_p=\overline{\mathcal L}_{p'}$ would imply
$\overline p'\in\overline{\mathcal L}_p$. We have therefore obtained an induced matching of size $|\mathcal P_M| =|A||Y| \asymp_K (M^2)^{d-1}M^d =M^{3d-2}$ (we used again \cref{prop:trace-sets}). 
\end{proof}

To show \eqref{eq:cyclotomic-IM-intro}, let $r\ge5$ be prime and consider again $K=\Q(\zeta_r+\zeta_r^{-1})$ to be the real subfield of the $r$th cyclotomic field $Q(\zeta_r)$. Recall again that this is a Galois extension of $\Q$ of degree $d:=[K:\Q]=\frac{r-1}{2}$. Recall also that $\operatorname{Gal}(\Q(\zeta_r)/\Q) \cong(\Z/r\Z)^\times$, where the class of $a\in(\Z/r\Z)^\times$ corresponds to the automorphism $\sigma_a(\zeta_r)=\zeta_r^a$. Complex conjugation is $\sigma_{-1}$, and $K_r$ is precisely its fixed field. Consequently, $\operatorname{Gal}(K/\Q) \cong(\Z/r\Z)^\times/\{\pm1\}$.

Now let $q\ne r$ be a rational prime. Since $q$ is unramified in $\Q(\zeta_r)$, its Frobenius automorphism is characterized by $\operatorname{Frob}_q(\zeta_r)=\zeta_r^q$. Under the preceding identification, it therefore corresponds to the residue class of $q$ modulo $r$. Its restriction to $K_r$ is trivial if and only if this class becomes trivial after quotienting by $\{\pm1\}$, which is equivalent to $q\equiv\pm1\pmod r$. Concretely, if $q\equiv1\pmod r$, then Frobenius is already the identity on $\Q(\zeta_r)$; if $q\equiv-1\pmod r$, then Frobenius is complex conjugation, which acts trivially on the maximal real subfield $K_r$. Finally, since $K_r/\Q$ is Galois, an unramified prime splits completely if and only if its Frobenius automorphism is trivial. Hence \[ q\text{ splits completely in }K_r \quad\Longleftrightarrow\quad q\equiv\pm1\pmod r. \] See, for example, \cite[Chapter~2]{Washington1997} for all these standard facts. 

\begin{proof}[Proof of \cref{thm:prime-field-matching}] 

Set $M:=\left\lfloor\frac{q^{1/(2d)}}2\right\rfloor$, then $(3M^2)^d \le\left(\frac34q^{1/d}\right)^d =\left(\frac34\right)^dq <q$ for all sufficiently large $q$. Hence, we can apply Proposition \ref{prop:finite-field-reduction} to say that
\[
        \operatorname{IM}(2,q) \gtrsim_r  q^{3/2-1/d}  =
        q^{3/2-\frac{2}{r-1}}
\]
for every sufficiently large prime $q\equiv\pm1\pmod r$. This proves
\eqref{eq:cyclotomic-IM-intro}, and thus also Theorem \ref{thm:prime-field-matching}.
\end{proof}

Combining \cref{thm:prime-field-matching} with
the $\operatorname{IM}(2,q) \leq q^{3/2}+q$ estimate, it follows that
\[
        \limsup_{\substack{q\to\infty\\q\ {\rm prime}}}
        \frac{\log\operatorname{IM}(2,q)}{\log q}
        =
        \frac32.
\]
In particular, no absolute power saving of the form
$\operatorname{IM}(2,q)\le q^{3/2-c}$ can hold for all sufficiently large primes $q$.

\section{Concluding remarks}
\label{sec:perspective}

Many other interesting problems remain open. For example, we do not believe that the sharpness of the $2/3$ exponent for the minimal distance problem means also that the Cohen--Pohoata--Zakharov upper bound of $\Delta_{\mathrm H}(n)\le n^{-7/6+o(1)}$ should be definitive for the Heilbronn triangle problem. In particular, note that the proof of
\[
        \Delta_{\mathrm H}(n)
        \lesssim n^{-1/2}\PL(\lfloor cn\rfloor),
\]
discussed in Section \ref{sec:intro}, discards a substantial amount of geometry. In this spirit, we would like to take the opportunity to formally state the following conjecture: 

\begin{conjecture}
There exists an absolute constant $c>0$ such that
$$\Delta_{\mathrm H}(n)\lesssim n^{-7/6-c}.$$
\end{conjecture}

Analogously, we also believe that $s(N) \leq N^{1-c}$ should hold for the Furstenberg--S\'ark\H{o}zy problem. This belief was already recorded in several other places, see e.g. \cite{GreenSawhney2024}. In light of the proof of Theorem \ref{thm:main}, it seems also natural to study the following number field generalization. For a {\it{fixed}} totally real field $K$ of degree $d \geq 1$, let
\[
        s_K(X)
        :=\max\Bigl\{|A|:A\subset B_K(X),\ 
        (A-A)\cap\{z^2:0\ne z\in\OK\}=\varnothing\Bigr\}.
\]
The case $K=\mathbb Q$ is exactly the classical Furstenberg--S\'ark\H{o}zy problem. Indeed, in this case we have that $\OK=\mathbb Z$ and $B_{\mathbb Q}(X) = \{-\lfloor X\rfloor,\ldots,\lfloor X\rfloor\}$. Therefore, after appropriately translating, it follows that $s_{\mathbb Q}(X)=s(2\lfloor X\rfloor+1)$. See also the closely related version from \cite{PandeySaha2025} for general number fields. 

For a totally real field $K$ of degree $d \geq 2$, the trace-zero lattice used in our proof of Theorem~\ref{thm:main} gives $s_K(X)\gtrsim_K X^{d-1}$. Conversely, $\mathbb Z\cdot1$ is a primitive direct summand of the
additive group $\OK$, so one may partition $B_K(X)$ into
$O_K(X^{d-1})$ fibers parallel to this copy of $\mathbb Z$.  The intersection of $A$ with each such fiber, after translation, is an ordinary square-difference-free set contained in an interval of
length $O_K(X)$. The result of \cite{GreenSawhney2024} therefore
implies
$$X^{d-1}\lesssim_K s_K(X) \lesssim_K X^d\exp\!\bigl(-c\sqrt{\log X}\bigr)$$
for an absolute constant $c>0$. It would be interesting to narrow this gap for sufficiently large $d$. 

Finally, in the spirit of \cite{HPVZ2026}, it is also natural to ask about the higher-dimensional analogues of \cref{thm:main,thm:prime-field-matching}, together with their potential applications to Nikodym sets and minimal blocking sets. We intend to address these in a separate work.

\medskip

\section*{Acknowledgments}

The author would like to acknowledge the important role of OpenAI's GPT-5.6 Pro in preparing the manuscript and in developing the proof of Theorem~\ref{thm:main}.  The author's original idea was to use a high-degree number field analogue of the Hunter--Pohoata--Verstra\"ete--Zhang construction to get from $\PL(n)\gtrsim n^{-0.81075\ldots}$ to the Ruzsa endpoint $\PL(n)\ge n^{-4/5}$. Naturally, the point was to leverage a better behaved version of the Ruzsa construction \eqref{eq:Ruzsa-lower} inside the rings of integers of such number fields. The decisive new idea of using the codimension one, square-difference-free, trace-zero lattice $\Lambda_K^0\subset 2\OK$ instead of a Ruzsa-like set, in order to subsequently upgrade the exponent $4/5$ to the sharp exponent $2/3$, is entirely due to GPT-5.6 Pro.

\end{document}